\documentclass[12pt,a4paper]{article}\usepackage{amsthm}\usepackage[dvipdfmx]{graphicx}
\setlength{\oddsidemargin}{2mm}
\setlength{\evensidemargin}{2mm}
\setlength{\topmargin}{-15mm}
\setlength{\textwidth}{156mm}
\setlength{\textheight}{250mm}

\usepackage{latexsym}
\usepackage{amssymb}
\usepackage{amsmath}
\usepackage{amscd}
\usepackage{amsfonts}
\usepackage{mathrsfs}
\usepackage{enumerate}
\usepackage[noadjust]{cite}
\usepackage{bm}
\usepackage[usenames]{color}
\usepackage[active]{srcltx}
\usepackage{ulem}
\pagestyle{plain}
 \makeatletter
    
    \@addtoreset{equation}{section}
  \makeatother
\theoremstyle{plain}
\newtheorem{theorem}{Theorem}[section]

\newtheorem*{fact*}{Fact}
\newtheorem{lemma}[theorem]{Lemma}
\newtheorem{corollary}[theorem]{Corollary}
\theoremstyle{definition}
\newtheorem{definition}[theorem]{Definition}

\newtheorem{example}[theorem]{Example}
\newtheorem{fact}[theorem]{Fact}
\numberwithin{equation}{section}

\newcommand{\bx}{\boldsymbol{x}}

\newcommand{\be}{\boldsymbol{e}}
\newcommand{\e}{\boldsymbol{e}}

\newcommand{\bgamma}{\boldsymbol{\gamma}}

\newcommand{\inner}[2]{\left\langle{#1},{#2}\right\rangle}
\newcommand{\inners}[2]{\langle{#1},{#2}\rangle}

\newcommand{\vect}[1]{\boldsymbol{#1}}
\newcommand{\y}{\vect{y}}

\newcommand{\bv}{\vect{v}}
\newcommand{\bL}{\vect{L}}
\newcommand{\N}{\vect{N}}
\newcommand{\T}{\vect{T}}
\newcommand{\se}{_{\rm E}}

\newcommand{\rank}{\operatorname{rank}}
\newcommand{\Gr}{\operatorname{Gr}}
\newcommand{\ep}{\varepsilon}
\newcommand{\R}{{\mathbb R}}

\newcommand{\pmt}[1]{{\begin{pmatrix} #1  \end{pmatrix}}}

\begin{document}
\title{Geometry of 
lightlike locus on mixed type surfaces in
Lorentz-Minkowski 3-space from a contact viewpoint}
\author{Atsufumi Honda\thanks{Partially supported  by JSPS KAKENHI 19K14526},
Shyuichi Izumiya, Kentaro Saji\thanks{
Partially  supported  by JSPS KAKENHI 18K03301 and the Japan-Brazil
bilateral project JPJSBP1 20190103.}\\
 and\\
Keisuke Teramoto\thanks{Partially  supported  by JSPS KAKENHI 19K14533}}
\date{\today}

\maketitle

\begin{abstract}
A surface in the Lorentz-Minkowski $3$-space
is generally a mixed type surface, namely,
it has the lightlike locus.
We study local differential geometric
properties of such a locus on a mixed type surface.
We define a frame field along a lightlike locus, and
using it, we define two
lightlike ruled surfaces along a lightlike locus
which can be regarded as lightlike approximations of
the surface along the lightlike locus.
We study a relationship of singularities of these 
lightlike surfaces and differential geometric properties of
the lightlike locus.
We also consider the intersection curve of two
lightlike approximations, which gives a model curve of the lightlike locus.
\end{abstract}

\renewcommand{\thefootnote}{\fnsymbol{footnote}}
\footnote[0]{2020 Mathematics Subject classification. Primary 53B30;
Secondary 53A55, 57R45, 58K05} 
\footnote[0]{Key Words and Phrases. mixed type surfaces, 
flat approximations, curves on surfaces, Darboux frame} 

\section{Introduction}\label{sec:intro} 
Let $f:U\to\R^3_1$ be a $C^\infty$ immersion or a ``frontal''
from an open set $U$ in $\R^2$ into the $3$-dimensional Lorentz-Minkowski
space $\R^3_1$.
When $f$ is an immersion,
spacelike, lightlike and timelike points are defined by the usual way,
and it can be defined analogically for a frontal.
The notion of lightlike points is an independent notion 
from singular points of $f$.
In fact, it is a singular point of the induced metric.
In this paper, we assume that the set of the lightlike points $L(f)$ of $f$
is a curve, and the lightlike locus $f|_{L(f)}$ is a spacelike regular curve.
Under this assumption, we define a moving frame field along $L(f)$.
The frame consists of the tangent vector of $f|_{L(f)}$ and
the two lightlike vectors $\bL,\N$.
The vector $\bL$ is tangent to the surface at $f|_{L(f)}$,
and $\N$ is normal to the surface  in the Euclidean sense.
Using it, we construct two special lightlike ruled surfaces 
whose ruling directions are $\bL$ and $\N$.
These surfaces can be regarded as
lightlike approximations of $f$ along $L(f)$.
We give conditions that singularities of
these surfaces are cuspidal edges and 
swallowtails in terms of
certain geometric properties of $f$.
This kind of study is firstly given in \cite{IO}
for an immersion in $\R^3$ with a given curve on it.
In our case, different from the Euclidean case,
rulings are lightlike lines. If singularities of
a lightlike approximation is a constant point, then
it is a lightcone.
On the other hand, geometry and singularities of developable surface 
defined by a moving frame as a curve in $\R^3_1$ is
studied in \cite{izuframe}.
Our moving frame is associated to the immersion or frontal,
geometric meanings are related to the properties of
the curve as a curve on the original $f$, so that it is deeply depending on the properties of the differential geometry on
the frontal $f$.
Furthermore, we consider the pair of contacts of
two lightlike approximations with the lightlike locus.
Then the intersection curve of this pair can be considered as 
the model curve of the lightlike locus.
We introduce a new notion of
the contact orders of pairs.
In our case, the contact orders of pairs of lightlike approximations characterize the singularities 
of the lightlike approximations and the pedals of the lightlike locus.

\section{Preliminaries}\label{sec:pre} 
\subsection{Frontals in $\R^3_1$}\label{sec:frontal} 
Let $\R^3_1$ be the Lorentz-Minkowski $3$-space
equipped with the scalar product
$\inner{\bx}{\y}=-x_0y_0+x_1y_1+x_2y_2$, where
$\bx=(x_0,x_1,x_2)$, $\y=(y_0,y_1,y_2)$.
Let $\Gr(2,3)$ be the Grassmannian of $2$-planes in $\R^3_1$ and
consider a subbundle $\R^3_1\times \Gr(2,3)\subset \R^3_1\times\R^3=T\R^3_1$.
It
can be identified with the projective tangent bundle
$PT\R^3_1$ 
via
$PT_q\R^3_1 \ni V_q\mapsto (V_q)^\perp$ by 
the scalar product $\inner{~}{~}$,
where $\bx^\perp=\{\y \in \R^3_1\,|\,\inner{\bx}{\y}=0\}$.
It is well known that $PT\R^3_1=\R^3_1\times P\R^3$ is a contact manifold.
Let $U$ be a domain in $\R^2$.
A map $f:U\to\R^3_1$ is a {\it frontal}\/ if
there exists a map $F:U\to \R^3_1\times P\R^3$ of
the form $F=(f,[\nu])$ such that
$F$ is an isotropic map, namely, 
$\inner{df(X)}{\nu}=0$ for any $X\in T_pU$ and $p\in U$.
Such an $F$ is called an {\it isotropic lift\/}
of $f$.
Identifying $PT\R^3_1$ with $\R^3_1\times \Gr(2,3)$,
$F=(f,[\nu])$ can
be identified with $(f,\nu^\perp)$,
where $\nu^\perp:U\to \Gr(2,3)$ be a $2$-dimensional subspace-valued
map.
We call $\nu^\perp(p)$ the {\it limiting tangent plane\/} at $p\in U$.
A frontal is a {\it front\/} if the isotropic lift
$F:U\to \R^3_1\times P\R^3$ is an immersion.
Let $f:U\to \R^3_1$ be an immersion, and let $(u,v)$ be
a coordinate system on $U$.
We set
$[\nu]=[f_u\times f_v]$, where
$$
\bx\times \y=\det\pmt{
-\e_0&&\\
\e_1&\bx&\y\\
\e_2&&}
\quad \left(\e_0=\pmt{1\\0\\0},
\ \e_1=\pmt{0\\1\\0},
\ \e_2=\pmt{0\\0\\1}\right)
$$
is the cross product.
Then $f$ is a front.
This $[\nu]$ is called a {\it lightcone Gauss map\/} of 
$f$ \cite{peili,izulidev}.
Thus, an immersion is a front. On the other hand,
a frontal may have singular points.
A singular point of a frontal germ $f$ at $p$ 
is a {\it cuspidal edge\/} if it is 
${\cal A}$-equivalent
to $(u,v)\mapsto(u,v^2,v^3)$ at the origin,
where two map germs $f_i:(\R^2,p_i)\to(\R^2,q_i)$ $(i=1,2)$ 
are ${\cal A}$-{\it equivalent\/} if there exist
diffeomorphism germs $\phi:(\R^2,p_1)\to(\R^2,p_2)$
and $\Phi:(\R^3_1,q_1)\to(\R^3_1,q_2)$ such that
$\Phi\circ f\circ\phi^{-1}=g$ holds.
Cuspidal edges are the most fundamental singularities
appear on fronts.
The generic singularities of fronts are cuspidal edges and swallowtails.
A singular point of a frontal germ $f$ at $p$ 
is a {\it swallowtail\/}
if it is ${\cal A}$-equivalent
to $(u,v)\mapsto(u,4v^3+2uv,4v^4+uv^2)$ 
at the origin.
We denote by $S(f)$ 
the set of singular points.
In this decade, differential geometric properties of frontals in $\R^3$ and
Riemannian $3$-manifold are investigated by many authors
(see \cite{ISTe,MSUY,SUY} for example).

On the other hand, points on frontals in $\R^3_1$ can be classified 
into the following three cases.
A non-zero vector $\bx\in \R^3_1$ is said to be
{\it spacelike\/}
(respectively,\ {\it timelike\/}, {\it lightlike\/}),
if $\inner{\bx}{\bx}$ is positive
(respectively, negative, zero).
We denote by $LC^*$ the set of lightlike vectors.
A plane $P=\bx^\perp\subset\R^3_1$ is said to be
{\it spacelike\/}
(respectively,\ {\it timelike\/}, {\it lightlike\/}),
if $\bx$ is timelike,
(respectively, spacelike, lightlike).
Let $f$ be a frontal and $(f,[\nu])$ its isotropic lift.
A point $p\in U$ is said to be
{\it spacelike\/}
(respectively, {\it timelike\/}, {\it lightlike\/}) {\it point\/},
if $\nu^\perp(p)$
is spacelike
(respectively, timelike, lightlike).
We denote by $U_+$ 
(respectively, $U_-$, $L(f)$)
the set of spacelike 
(respectively,\ timelike, lightlike)
points.
If $U_+$, $U_-$ and $L(f)$ are all non-empty,
then $f$ is said to be {\it mixed type}.

There are many studies on differential geometric property of spacelike regular surfaces or spacelike frontals 
in $\R^3_1$ (see \cite{honizu,iz1,iz3,iz4,fuji3,UY_hokudai} for example), 
and surfaces whose lightlike point set $L(f)$ is non-empty
 (see \cite{Akamine,izulidev,fhkuy,fuji1,fuji2,
HKKUY,iz2,kly,kosskri,peili,remizov,remizovtari,tari,UY_hokudai,UY2018} for example). 
However, local behavior of a lightlike point
are investigated only on frontals with special curvature properties.

\subsection{Criteria for singularities of fronts in $\R^3_1$}\label{sec:cri}
To state criteria for singularities of fronts in $\R^3_1$,
we firstly recall criteria for singularities of fronts in $\R^3$
given in \cite{krsuy}.
Let $f:U\to\R^3$ be a frontal and $F=(f,[\nu\se]):U\to PT\R^3=\R^3\times P\R^3$ 
its isotropic lift, namely, $\nu\se$ satisfies 
$df_p(X)\cdot\nu\se(p)=0$ for any $p\in U$ and $X\in T_pU$,
where the dot ``$\cdot$'' stands for the Euclidean inner product.
We set $\lambda\se=\det(f_u,f_v,\nu\se)$.
Then $S(f)=\lambda\se^{-1}(0)$ holds.
A function $\Lambda\se$ is called 
an {\it identifier of singularities\/} if it is a non-zero functional
multiple of $\lambda\se$. Let $\Lambda\se$ be an identifier of singularities.
Let $p\in U$ be a singular point satisfying $\rank df_p=1$.
Then there exists a non-zero vector field $\eta$ on a neighborhood
of $p$ such that
the kernel of $df_q$ is generated by $\eta(q)$ for any
$q\in S(f)$. We call it a {\it null vector field}.
The following fact holds.
\begin{fact}\label{fact:crie}{\rm (\cite[Corollary 2.5]{suy3})}
Let\/ $f:U\to\R^3$ be a frontal, and let\/ $p\in U$ 
be a singular point satisfying\/ $\rank df_p=1$.
Under the above notation, 
\begin{enumerate}
\item\label{item:ce} $f$ at\/ $p$ is a cuspidal edge if and only if\/
$f$ is a front at\/ $p$, and\/ $\eta\Lambda\se(p)\ne0$.
\item\label{item:sw} $f$ at\/ $p$ is a swallowtail if and only if\/
$f$ is a front at\/ $p$, and\/ $\eta\Lambda\se(p)=0$,
$\eta\eta\Lambda\se(p)\ne0$, $d(\Lambda\se)_p\ne0$.
\end{enumerate}
\end{fact}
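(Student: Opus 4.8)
\medskip
\noindent\textbf{A plan for proving Fact~\ref{fact:crie}.}
The strategy is to reduce both statements to the cuspidal-edge and swallowtail normal forms recalled in Section~\ref{sec:pre} via an $\mathcal A$-invariance argument, and then to recover those normal forms from the hypotheses by straightening the singular curve together with its null direction. First I would record that the listed conditions do not depend on the chosen identifier: writing $\Lambda\se=\theta\lambda\se$ with $\theta$ nowhere zero and evaluating at the singular point $p$, where $\lambda\se(p)=0$, the Leibniz rule yields $\eta\Lambda\se(p)=\theta(p)\,\eta\lambda\se(p)$, $d(\Lambda\se)_p=\theta(p)\,d(\lambda\se)_p$, and, when moreover $\eta\lambda\se(p)=0$, $\eta\eta\Lambda\se(p)=\theta(p)\,\eta\eta\lambda\se(p)$; the same bookkeeping, together with the fact that a diffeomorphism of the target induces a contactomorphism of $PT\R^3$, shows that both ``$f$ is a front at $p$'' and the derivative conditions are invariant under $\mathcal A$-equivalence of $f$. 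Hence the ``only if'' directions become a direct check on the two models: for $(u,v)\mapsto(u,v^2,v^3)$ one gets $S(f)=\{v=0\}$, null field $\partial_v$, immersive isotropic lift, and $\lambda\se=v\cdot(\text{nowhere zero})$, so $\eta\lambda\se(0)\neq0$; for the swallowtail model one gets that its singular set is a regular curve through $0$ with $d(\lambda\se)_0\neq0$, that its null field is tangent to that curve at $0$ so $\eta\lambda\se(0)=0$ while $\eta\eta\lambda\se(0)\neq0$, and that the isotropic lift is again an immersion.

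For the ``if'' direction I would first bring the singular curve and the null direction into normalized form. In case~(1) the hypothesis $\eta\Lambda\se(p)\neq0$ already forces $d(\lambda\se)_p\neq0$, so $S(f)=\lambda\se^{-1}(0)$ is a regular curve near $p$; in case~(2) this regularity is precisely the hypothesis $d(\Lambda\se)_p\neq0$. Taking coordinates $(u,v)$ centred at $p$ with $S(f)=\{v=0\}$, in case~(1) the condition $\eta\lambda\se(p)\neq0$ says $\eta(p)$ is transverse to $\{v=0\}$, and a shear in the $u$-direction --- whose existence uses exactly this transversality --- arranges $\eta=\partial_v$ along $\{v=0\}$, so $f_v(u,0)\equiv0$. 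Hadamard's lemma then gives $f_v=v\,\xi$ and hence $f(u,v)=\gamma(u)+v^2\zeta(u,v)$, with $\gamma(u)=f(u,0)$ a regular curve and, because $d(\lambda\se)_p\neq0$, with $\gamma'(0)$ and $\zeta(0,0)=\tfrac12 f_{vv}(0)$ linearly independent. In case~(2), $\eta(p)$ is instead tangent to the regular curve $\{v=0\}$, and $\eta\eta\Lambda\se(p)\neq0$ says this contact has the lowest possible order; a corresponding straightening then exhibits $f$, after shrinking, as a one-parameter family in $u$ of plane-curve germs with an $A_2$-type degeneration at $p$.

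The hard part will be the last step --- matching $f$ to the model up to $\mathcal A$-equivalence. Here the front hypothesis is indispensable: I would use it to show that the component of the cubic coefficient $\tfrac16 f_{vvv}(0)$ along $\nu\se(0)$ is nonzero, i.e.\ $\inners{f_{vvv}(0)}{\nu\se(0)}\neq0$, which is what distinguishes the front model $(u,v^2,v^3)$ from the non-front germ $(u,v^2,0)$; granting this, a short sequence of source changes (first $u\mapsto u+(\text{cubic in }v)$, then $v\mapsto v\sqrt{1+v\,(\cdots)}$) followed by a target diffeomorphism carries $f$ onto $(u,v^2,v^3)$. Case~(2) is the genuinely delicate one: I would either run the division/normal-form reduction for the $A_2$-unfolding by hand, tracking how $d(\Lambda\se)_p\neq0$ and $\eta\eta\Lambda\se(p)\neq0$ pin it down (as in \cite{krsuy}), or pass to the isotropic lift $F=(f,[\nu\se])$ --- a Legendrian immersion in $PT\R^3$ --- and invoke the Arnol'd--Zakalyukin classification of generic Legendrian singularities to identify $F$ with the lift of the swallowtail model. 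Either way, the reason the criteria are phrased through the identifier $\Lambda\se$ rather than one fixed representative is exactly that this is what legitimizes all of these source and target coordinate changes.
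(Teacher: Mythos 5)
This statement is quoted in the paper as a \emph{Fact} with the citation \cite[Corollary~2.5]{suy3}; the paper supplies no proof of it, so there is nothing internal to compare your argument against. Measured instead against the proofs in the cited literature (\cite{krsuy}, \cite{suy3}), your outline is the standard one and is essentially sound: the reduction to invariance of the conditions under change of identifier and under $\mathcal A$-equivalence, the verification on the two normal forms for the ``only if'' directions, and, for the ``if'' direction of (1), the normalization $S(f)=\{v=0\}$, $\eta=\partial_v$, followed by Hadamard's lemma and the observation that the front hypothesis is exactly what makes $\inners{f_{vvv}(0)}{\nu\se(0)}\ne0$ --- all of this is correct and is how the cuspidal-edge criterion is actually proved. (One small remark on your model check: as printed in Section~\ref{sec:pre} the swallowtail normal form $(u,4v^3+2uv,4v^4+uv^2)$ has singular set equal to a single point rather than a regular curve; the intended germ is $(u,4v^3+2uv,3v^4+uv^2)$, for which your claimed properties of $S(f)$, $\eta$, and $\lambda\se$ do hold.)

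The one place where your proposal is still a plan rather than a proof is the swallowtail recognition in case (2), and you say so yourself. Neither of the two routes you name is routine: the hand reduction of the $A_2$-degenerating family requires a careful division-theorem argument to normalize the $v$-jet uniformly in $u$, and the Legendrian route requires verifying the hypotheses of the Zakalyukin-type recognition theorem (that the Legendrian lift is an immersion and that the front is ``generic'' in the required sense), which is precisely where the conditions $d(\Lambda\se)_p\ne0$ and $\eta\eta\Lambda\se(p)\ne0$ must be cashed in. In \cite{krsuy} this step occupies the bulk of the proof. So: right approach, cuspidal-edge case essentially complete, swallowtail case correctly scoped but genuinely unfinished; since the result is imported here as a known fact, that is an acceptable state for a reconstruction, but it should not be mistaken for a full proof.
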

A similar criteria hold for frontals in $\R^3_1$ by
a slight modification.
Let $f:U\to\R^3_1$ be a frontal and $F=(f,[\nu])$ its isotropic lift,
where $[\nu]\in P\R^3_1$, namely, $\nu$ satisfies 
$\inner{df_p(X)}{\nu(p)}=0$ for any $p\in U$ and $X\in T_pU$.
Taking a vector field $\T$ along $f$
such that $\T$ is transverse to $\nu^\perp$,
we set $\Lambda=\det(f_u,f_v,\T)$.
Then we see that
$\Lambda$ is a non-zero functional multiple of
$\lambda\se$.
Thus $\Lambda$ is an identifier of singularities.
By using $\Lambda$, we can recognize
whether $f$ at $p$ is a cuspidal edge or a swallowtail.

\subsection{Discriminant sets of functions}\label{sec:discr} 
The lightlike approximations are envelopes of
lightlike planes along the lightlike locus.
To construct lightlike approximation,
we use the theory of unfolding of a function and
discriminant set which can describe the envelopes.

Let $a:(\R,0)\to(\R,0)$ be a function. 
For a manifold $X$ and $p\in X$,
a function $A:(\R\times X,(0,p))\to(\R,0)$ 
is called an {\it unfolding\/} of $a$ if $A(u,p)=a(u)$ holds.
In this setting, we regard $A$ as a parameter family of a function $a$.
We assume that  $a'(0)=0$ $('=\partial/\partial u)$
and define the set $\Sigma_A$ and the 
{\it discriminant set\/} ${\cal D}_A$ of $A$ as
\begin{align*}
\Sigma_A&=
\{(u,q)\in \R\times X\,|\,A(u,q)=A_u(u,q)=0\},\\
{\cal D}_A&=
\{q\in X \ |\ \text{ there exists }u\in \R\text{ such that } 
A(u,q)=A_u(u,q)=0\}.
\end{align*}
If the map $(A,A_u)$ is a submersion at $(0,p)$, then
$\Sigma_A$ is a manifold.
By definition,  the discriminant set is the envelope of the family 
$\{q\in X | A(u,q)=0\}_{u\in \R}$.
See \cite[Section 7]{Bru-Gib} or \cite[Section 5]{irrt} 
for the general theory of  unfoldings and their  discriminant sets.

\section{Lightlike surfaces}\label{sec:lldev}
\subsection{Frame along lightlike locus}\label{sec:frame} 
Let $f:U\to \R^3_1$ be a frontal 
whose lightlike point set $L(f)$ is non-empty,
and let $F=(f,[\nu])$ be its isotropic lift.
We take $p\in L(f)$, and assume that 
\begin{equation}\label{eq:assump0}
\begin{array}{l}
L(f)\text{ is a regular curve in }U\text{ which is}\\
\hspace{15mm}
\text{parametrized by }\gamma:(-\ep,\ep)\to U\text{ near $p=\gamma(0)$.}
\end{array}
\end{equation} 
Under this assumption,
$f(L(f))$ is called the {\it lightlike locus\/}.
We set $\hat\bgamma=f\circ\gamma$.
We assume that 
\begin{equation}\label{eq:assump1}
\text{if $p\in S(f)$, then $\gamma'(0)\not\in \ker df_p$, }
\end{equation}
where $'=d/du$.
This implies that $\hat\bgamma$ is a regular curve in $\R^3_1$.
Furthermore, we also assume that 
\begin{equation}\label{eq:assump2}
\text{$\hat\bgamma'(u)$ is not lightlike.}
\end{equation}
By these assumptions, 
$\hat\bgamma$ is a spacelike regular curve in $\R^3_1$.
A frontal which satisfies the assumptions
\eqref{eq:assump0}, \eqref{eq:assump1} and \eqref{eq:assump2}
are said to be an {\it admissible frontal.}

A frame along a spacelike regular curve 
in $\R^3_1$ is obtained in \cite{izuframe}.
Here we consider a frame along $L(f)$ as a curve
on the surface $f$.
Then we can take a parameter $u$ such that
$|\hat\bgamma'(u)|=1$, where
$|\bx|=\sqrt{|\inner{\bx}{\bx}|}$.
Setting $\e(u)=\hat\bgamma'(u)$, 
we have a frame $\{\e,\bL,\N\}$ along
$\hat\bgamma(u)$ satisfying
\begin{equation}\label{eq:condln}
\inner{\e}{\e}=1,\quad
\inner{\e}{\bL}=0,\quad
\inner{\e}{\N}=0,\quad
\inner{\bL}{\bL}=0,\quad
\inner{\bL}{\N}=1,\quad
\inner{\N}{\N}=0,
\end{equation}
and the plane spanned by $\e$ and $\bL$ is 
the limiting tangent plane of
$f$ along $\hat\bgamma$.
The limiting tangent plane is also called the
{\it osculating plane\/}, and the plane
spanned by $\e$ and $\N$ is called the {\it transversal lightlike plane\/}
of $f$ along $\hat\bgamma$,
since it is a unique lightlike plane which contains $\e$ and
transverse to $f$.
The following Frenet-Serret type formula
$$
(\e,\bL,\N)'=(\e,\bL,\N)\pmt{
0&-\alpha_L&-\alpha_N\\
\alpha_N&-\alpha_G&0\\
\alpha_L&0&\alpha_G}
$$
holds,
where
\begin{equation}\label{eq:alphalng}
\alpha_L(u) = \inner{ \hat{\bgamma}''(u) }{ \bL(u) },\quad
\alpha_N(u) = \inner{\hat{\bgamma}''(u)}{\N(u)},\quad
\alpha_G(u) = \inner{\bL(u)}{\N'(u)}.
\end{equation}
These three functions are determined by
$f$ and $\bL$.
We set $\overline{\bL}(u)=\psi(u)\bL(u)$, 
where $\psi$ is a never vanishing
function.
Then setting $\overline{\N}=\N/\psi$,
the frame $\{\e,\overline{\bL},\overline{\N}\}$
satisfies the condition \eqref{eq:condln}, and
it holds that
\begin{equation}\label{eq:alphadepend}
\overline{\alpha_L}=\psi\alpha_L,\quad
\overline{\alpha_N}=\alpha_N/\psi,\quad
\overline{\alpha_G}=\alpha_G+\psi(\psi^{-1})'
\end{equation}
where $\overline{\alpha_L},\overline{\alpha_N},\overline{\alpha_G}$
are defined by \eqref{eq:alphalng} with respect to the frame
$\{\e,\overline{\bL},\overline{\N}\}$.

Let $f:U\to \R^3_1$ be an immersion
with non-empty lightlike point set $L(f)$.
Suppose that $L(f)$ consists of lightlike points of the first kind
(see \cite[Definition 2.2]{HST} for details).
Then $f$ is a mixed type surface,
is admissible in the above sense,
and $\alpha_L$, $\alpha_N$, $\alpha_G$ 
can be regarded as invariants of $f$ as follows:
There exists a vector field $l$ on $U$
such that $df(l(q))=\bL(q)$ for any $q\in L(f)$
and $\beta=l\inner{df(l)}{df(l)}|_{L(f)}$ 
does not vanish along $L(f)$.
Let $\kappa_L$, $\kappa_N$, and $\kappa_G$
be the {\it lightlike singular curvature}, 
the {\it lightlike normal curvature}, and
the {\it lightlike geodesic torsion\/}
of $f$ along $L(f)$, respectively\footnote{These three invariants 
$\kappa_L$, $\kappa_N$, $\kappa_G$
are introduced in \cite{HST} to investigate the behavior 
of the Gaussian curvature of mixed type surfaces at lightlike points,
cf. \cite[Theorem B]{HST}.}
(\cite[Definition 3.2]{HST}).
By \cite[Proposition 3.5]{HST}, it holds that
\begin{equation}\label{eq:frenet2}
\begin{array}{rl}
\alpha_L=\beta^{1/3}\kappa_L,\quad
\alpha_N=\beta^{-1/3}\kappa_N,\quad
\alpha_G=\kappa_G+\beta^{1/3}(\beta^{-1/3})'.
\end{array}
\end{equation}
One can take $l$ satisfying $\beta=1$,
by rechoosing $\beta^{-1/3} \,l$ instead of $l$.
Then
$\alpha_L=\kappa_L$,
$\alpha_N=\kappa_N$,
$\alpha_G=\kappa_G$ hold.

\subsection{Osculating and transversal lightlike surfaces}
\label{sec:construction}
Let $f:U\to \R^3_1$ be an admissible frontal. 
Under the notation in Section \ref{sec:frame}, 
we consider the discriminant sets of
the following functions
\begin{align}
H_L(u,\bx)=&\inner{\bx-\hat\bgamma(u)}{\bL(u)}:L(f)\times \R^3_1\to\R,\nonumber\\
H_N(u,\bx)=&\inner{\bx-\hat\bgamma(u)}{\N(u)}:L(f)\times \R^3_1\to\R,\nonumber\\
G(u,\bx)=&\inner{\bx-\hat\bgamma(u)}{\bx-\hat\bgamma(u)}:L(f)\times \R^3_1\to\R,
\label{eq:funcg}\\
\widetilde{H}(u,\widetilde{\bv},r)=
&\inner{\hat\bgamma(u)}{\widetilde{\bv}}-r:L(f)\times S^1_+\times\R^\times\to\R,
\label{eq:functh}
\end{align}
where
$S^1_+=\{(1,x_1,x_2)\,|\,(x_1)^2+(x_2)^2=1\}$
and
$\R^\times=\R\setminus\{0\}$.
Here we regard $(X,p)=(\R^3_1,p)$ for $p\in \R^3_1$ and 
$A=H_L, H_N,G,\widetilde{H}$ under the notation in Section \ref{sec:discr}.
Since $\bL$ and $\N$ are lightlike, 
the discriminant set ${\cal D}_{H_L}$ is 
the  envelope of families of lightlike planes which 
are tangent to the frontal at $\hat\bgamma(u)$,
and
${\cal D}_{H_N}$ is 
the  envelope of families of transversal lightlike planes 
of $f$ at $\hat\bgamma(u)$.
We calculate the discriminant set of $H_L$ (respectively, $H_N$)
under the assumption $\alpha_L\ne0$ (respectively, $\alpha_N\ne0$).
Let us assume $\alpha_L\ne0$ (respectively, $\alpha_N\ne0$) for any $u\in I$.
Since 
$$
H_L'=\inner{\bx-\hat\bgamma(u)}{-\alpha_L\e-\alpha_G\bL}\quad
\text{and}
\quad
H_N'=\inner{\bx-\hat\bgamma(u)}{-\alpha_N\e + \alpha_G\N},$$
the discriminant sets ${\cal D}_{H_L}$, ${\cal D}_{H_N}$ 
can be parametrized by
$$
f_L(u,v)=\hat\bgamma(u)+v\bL(u)\quad\text{and}\quad
f_N(u,v)=\hat\bgamma(u)+v\N(u)
$$
respectively, where $'=\partial/\partial u$.
Since 
$\inner{f_L'}{\bL}=\inner{(f_L)_v}{\bL}=0$
and 
$\inner{f_N'}{\N}=\inner{(f_N)_v}{\N}=0$
hold, 
$\bL$ and $\N$ are lightcone Gauss map of $f_L$ and
$f_N$ respectively even on the set of singular points.
Since each lightcone Gauss map degenerates to a curve,
$f_L$ and $f_N$ have both zero Gaussian curvature 
in the Euclidean sense
(\cite[Theorem 3.1]{izulidev}).
Moreover, the limiting tangent plane of $f_L$ 
coincides with that of $f$ along $L(f)$,
and the limiting tangent plane of $f_N$ 
is the transversal lightlike plane of $f$ along $L(f)$.
In this sense, we call 
$f_L(u,v)$ (respectively, $f_N(u,v)$) the 
{\it osculating lightlike surface\/} 
(respectively, {\it transversal lightlike surface\/})
of $f$ along $L(f)$.
Now, let us investigate $f_L$ (respectively, $f_N$), 
without assuming $\alpha_L\ne0$ (respectively, $\alpha_N\ne0$), 
since such the assumptions
are not necessary for the definition of 
$f_L$ and $f_N$.
After obtaining these surfaces, it does not necessary
the condition $\alpha_L\ne0$ (respectively, $\alpha_N\ne0$).
We investigate these surfaces 
without the condition $\alpha_L\ne0$
(respectively, $\alpha_N\ne0$).

By a similar calculation,
we see ${\cal D}_{G}={\cal D}_{H_L}\cup{\cal D}_{H_N}$.
On the other hand, since $\widetilde{H}'=\inner{\be}{\widetilde{\bv}}$ holds,
$\widetilde{H}'=0$ implies $\widetilde{\bv}=a\bL+b\N$.
Since $\widetilde{\bv}$ is lightlike, $ab=0$.
Moreover, {$\widetilde{\bv}\in S^1_+$}, we have
$\widetilde{\bv}=\widetilde{\bL}$ or $\widetilde{\bv}=\widetilde{\N}$.
Thus
$$
{\cal D}_{\widetilde{H}}=
\Big\{\Big(\widetilde{\bL(u)},
\inner{\hat\bgamma(u)}{\widetilde{\bL(u)}}\Big)\,\Big|\,
u\in L(f)\Big\}\cup
\Big\{\Big(\widetilde{\N(u)},
\inner{\hat\bgamma(u)}{\widetilde{\N(u)}}\Big)\,\Big|\,
u\in L(f)\Big\}.
$$
Let $\Phi:S^1_+\times\R^\times\to LC^*$ be the diffeomorphism
$\Phi(\widetilde{\bv},r)= r\widetilde{\bv}$.
We set
$$LP_L(u)=\inners{\hat\bgamma(u)}{\widetilde{\bL(u)}}\widetilde{\bL(u)}
\quad
\text{and}
\quad
LP_N(u)=\inners{\hat\bgamma(u)}{\widetilde{\N(u)}}\widetilde{\N(u)}.
$$
The curve $LP_L$
(respectively, $LP_N$)
is called the {\it osculating lightcone pedal\/}
(respectively, 
{\it transversal lightcone pedal\/}) of $f$ (cf.\ \cite{izuframe}).
The union of the images of osculating lightcone pedal and
transversal lightcone pedal coincides with
$\Phi({\cal D}_{\widetilde{H}})$.
We also consider discriminant sets ${\cal D}_{G}$ and
${\cal D}_{\widetilde{H}}$ in the context of bi-contact of lightcones
in Section \ref{sec:contactinter}.

\subsection{Singularities of osculating and transversal lightlike surfaces}
\label{sec:lldevo}
We set two functions
$$
\sigma_L(u)=\alpha_L'(u)+\alpha_L(u)\alpha_G(u),\quad\text{and}\quad
\sigma_N(u)=\alpha_N'(u)-\alpha_N(u)\alpha_G(u).
$$
We have the following:
\begin{theorem}\label{thm:singsurf}
Let\/ $f:U\to \R^3_1$ be an admissible frontal.
Then $f_L$ and\/ $f_N$ are front.
A point\/ $(u,v)$ is a singular point of\/ $f_L$ $($respectively, $f_N)$
if and only if\/ $1-v\alpha_L=0$ $($respectively, $1-v\alpha_N=0)$.
A singular point\/ $(u,1/\alpha_L)$ of\/ $f_L$ 
$($respectively, $(u,1/\alpha_N)$ of\/ $f_N)$
is 
\begin{enumerate}
\item a cuspidal edge if and only if\/
$\sigma_L\ne0$ 
$($respectively, $\sigma_N\ne0)$ at\/ $u$.
\item a swallowtail
if and only if\/
$\sigma_L=0$ and\/ 
$\sigma_L'\ne0$
$($respectively, $\sigma_N=0$
and\/
$\sigma_N'\ne0)$ at $u$.
\end{enumerate}
\end{theorem}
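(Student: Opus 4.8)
The plan is to express everything through the Frenet--Serret type formula of Section~\ref{sec:frame} (equivalently $\bL'=-\alpha_L\e-\alpha_G\bL$ and $\N'=-\alpha_N\e+\alpha_G\N$) and then apply Fact~\ref{fact:crie} in its $\R^3_1$-version; $f_L$ and $f_N$ are treated in parallel, so I describe $f_L$. Differentiating $f_L(u,v)=\hat\bgamma(u)+v\bL(u)$ gives $(f_L)_u=(1-v\alpha_L)\e-v\alpha_G\bL$ and $(f_L)_v=\bL$, and since $\{\e,\bL,\N\}$ is a basis these are linearly dependent precisely when $1-v\alpha_L=0$; the same computation with $\N'$ shows $S(f_N)=\{1-v\alpha_N=0\}$, which is the second assertion. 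For the front property, note that the limiting tangent plane of $f_L$ is the plane spanned by $\e$ and $\bL$, so the isotropic lift is $F_L=(f_L,[\bL])$, consistently with the remark in Section~\ref{sec:construction} that $\bL$ is a lightcone Gauss map of $f_L$ even along $S(f_L)$. Then $(F_L)_v=(\bL,\bo)$, so $F_L$ can fail to be an immersion only where both $d[\bL](\partial_u)=0$ (i.e.\ $\bL'\parallel\bL$, i.e.\ $\alpha_L=0$) and $(f_L)_u\parallel\bL$; but when $\alpha_L=0$ one has $(f_L)_u=\e-v\alpha_G\bL$, which is never proportional to $\bL$. Hence $F_L$ is an immersion and $f_L$ is a front, and likewise $f_N$. (In particular no global hypothesis $\alpha_L\ne0$ is needed, and note that any singular point $(u,1/\alpha_L)$ forces $\alpha_L(u)\ne0$, so rank\,$d(f_L)=1$ there.)

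Next I would produce an identifier of singularities and a null vector field. Taking $\T=\N$, which is transverse to the plane spanned by $\e$ and $\bL$, gives $\Lambda_L:=\det\big((f_L)_u,(f_L)_v,\N\big)=(1-v\alpha_L)\det(\e,\bL,\N)$, and $\det(\e,\bL,\N)$ is a nonzero constant (its square equals $1$ by the Gram matrix determined by \eqref{eq:condln}), so $\Lambda_L=1-v\alpha_L$ serves as an identifier. On $S(f_L)$ we have $(f_L)_u=-(\alpha_G/\alpha_L)\bL=-(\alpha_G/\alpha_L)(f_L)_v$, so $\eta_L:=\alpha_L\,\partial_u+\alpha_G\,\partial_v$ is a null vector field, nonzero near $S(f_L)$ because there $\alpha_L\ne0$. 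A direct computation gives $\eta_L\Lambda_L=-\alpha_L(v\alpha_L'+\alpha_G)$, hence $\eta_L\Lambda_L(u,1/\alpha_L)=-(\alpha_L'+\alpha_L\alpha_G)=-\sigma_L(u)$. Since $f_L$ is a front, Fact~\ref{fact:crie}(1) shows that $(u,1/\alpha_L)$ is a cuspidal edge if and only if $\sigma_L\ne0$.

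For the swallowtail case I would compute the remaining two quantities. We have $d\Lambda_L=-v\alpha_L'\,du-\alpha_L\,dv$, which is nonzero at every point of $S(f_L)$ because $\alpha_L\ne0$ there, so the condition $d\Lambda_L\ne0$ in Fact~\ref{fact:crie}(2) is automatic (and this also shows $S(f_L)$ is a regular curve). Applying $\eta_L$ once more to $\eta_L\Lambda_L$ and evaluating at $v=1/\alpha_L$ one gets $\eta_L\eta_L\Lambda_L=-\alpha_L\sigma_L'-\alpha_L'\sigma_L$, so on the set $\{\sigma_L=0\}$ it equals $-\alpha_L\sigma_L'$, which is nonzero exactly when $\sigma_L'\ne0$. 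Hence by Fact~\ref{fact:crie}(2) a singular point with $\sigma_L=0$ is a swallowtail if and only if $\sigma_L'\ne0$. The transversal lightlike surface $f_N$ is handled by the mirror computation: with $\T=\bL$ one gets $\Lambda_N=1-v\alpha_N$, null vector field $\eta_N=\alpha_N\,\partial_u-\alpha_G\,\partial_v$, $\eta_N\Lambda_N(u,1/\alpha_N)=-(\alpha_N'-\alpha_N\alpha_G)=-\sigma_N(u)$, $d\Lambda_N\ne0$ on $S(f_N)$, and $\eta_N\eta_N\Lambda_N=-\alpha_N\sigma_N'-\alpha_N'\sigma_N$, which finishes the proof.

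The step I expect to be the main obstacle is establishing that $f_L$ and $f_N$ are fronts — concretely, the observation that a failure of immersivity of the isotropic lift at a singular point would force $\alpha_L=0$ (resp.\ $\alpha_N=0$), which is incompatible with the point being singular for $f_L$ (resp.\ $f_N$) — together with checking that Fact~\ref{fact:crie} carries over to $\R^3_1$ verbatim once one uses the identifier $\Lambda=\det(f_u,f_v,\T)$ with $\T$ transverse to $\nu^\perp$, as indicated in Section~\ref{sec:cri}. Everything else is the routine substitution of the Frenet formula sketched above.
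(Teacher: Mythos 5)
Your proof is correct and follows essentially the same route as the paper: compute $(f_L)_u=(1-v\alpha_L)\e-v\alpha_G\bL$, take $\Lambda=1-v\alpha_L$ as identifier, exhibit a null vector field, and apply the criteria of Fact~\ref{fact:crie}; your $\eta_L=\alpha_L\partial_u+\alpha_G\partial_v$ is just the paper's $\partial_u+v\alpha_G\partial_v$ rescaled by the nonvanishing factor $\alpha_L$ along the singular set, so the resulting conditions agree. Your explicit verification of $d\Lambda_L\ne0$ and the immersivity of the isotropic lift is slightly more detailed than the paper's, but the argument is the same.
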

We remark that although $\alpha_L,\alpha_N,\alpha_G$ do depend
on the choice of $\bL$, 
the positions of singular points of $f_L$ and $f_N$ do not depend
on it, and
the conditions (1) and (2) do not
depend on it (cf. \eqref{eq:alphadepend}).
In fact, if $\overline{\bL}=\psi\bL$, then $\overline{\N}=\N/\psi$,
and \eqref{eq:alphadepend} holds. Moreover, 
by \eqref{eq:alphadepend}, we have
\begin{equation}\label{eq:sigmaindep}
\overline{\alpha_L}'+\overline{\alpha_L}\,\overline{\alpha_G}=
\psi(\alpha_L'+\alpha_L\alpha_G),\quad
\overline{\alpha_N}'-\overline{\alpha_N}\,\overline{\alpha_G}=
\psi^{-1}(\alpha_N'-\alpha_N\alpha_G).
\end{equation}
\begin{proof}
We set $g(u,v)=f_L(u,v)$ for simplicity.
The isotropic lift of $f_L$ is $(f_L,[\bL])$.
Since $g'=(1-v\alpha_L)\e-v\alpha_G\bL$, and $g_v=\bL$,
setting $\Lambda=1-v\alpha_L$, $\Lambda$ can be taken as 
an identifier of singularities.
A null vector field $\eta$ is
$
\eta=\partial_u+v\alpha_G \partial_v
$. 
Since $\eta[\bL]\ne0$ if and only if $\bL$ and $\eta \bL$ are
linearly independent, $g$ at a singular point is a front
if and only if $\alpha_L\ne0$.
If $\alpha_L(u_0)=0$, then $(u_0,v)$ is not a singular point.
Thus $g$ is a front at any singular point.
We have $\eta\Lambda=-v\sigma_L$ and
$\eta\eta\Lambda=\sigma_L'$.
Thus the assertion for $g$ holds.
The assertion for $f_N$ can be shown by the same way
taking an isotropic lift $(f_N,[\N])$, 
an identifier of singularities $\Lambda=1-v\alpha_N$ and 
a null vector field
$
\eta=\partial_u-v\alpha_G \partial_v
$. 
\end{proof}
If $f$ is an immersion
and $L(f)$ consists 
of lightlike points of the first kind,
then the conditions in Theorem \ref{thm:singsurf}
can be stated in terms of the curvatures $\kappa_L,\kappa_N,\kappa_G$
as follows.
\begin{corollary}\label{cor:singsurf}
Let\/ $f:U\to \R^3_1$ be an immersion 
with non-empty lightlike point set $L(f)$.
Suppose that $L(f)$ consists of lightlike points of the first kind.
Then $f_L$ and\/ $f_N$ are front.
A point\/ $(u,v)$ is a singular point of\/ $f_L$ $($respectively, $f_N)$
if and only if\/ $1-v\kappa_L=0$ $($respectively, $1-v\kappa_N=0)$.
A singular point\/ $(u,1/\kappa_L)$ of\/ $f_L$ 
$($respectively, $(u,1/\kappa_N)$ of\/ $f_N)$
is 
\begin{enumerate}
\item a cuspidal edge if and only if\/
$\widetilde\sigma_L\ne0$ 
$($respectively, $\widetilde\sigma_N\ne0)$ at\/ $u$.
\item a swallowtail
if and only if\/
$\widetilde\sigma_L=0$ and\/ 
$\widetilde\sigma_L'\ne0$
$($respectively, $\widetilde\sigma_N=0$
and\/
$\widetilde\sigma_N'\ne0)$ at $u$.
\end{enumerate}
Here, 
$$
\widetilde\sigma_L(u)=\kappa_L'(u)+\kappa_L(u)\kappa_G(u),\quad\text{and}\quad
\widetilde\sigma_N(u)=\kappa_N'(u)-\kappa_N(u)\kappa_G(u).
$$
\end{corollary}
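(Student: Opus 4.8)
The plan is to read off Corollary~\ref{cor:singsurf} from Theorem~\ref{thm:singsurf} after normalizing the frame so that the invariants $\alpha_L,\alpha_N,\alpha_G$ become $\kappa_L,\kappa_N,\kappa_G$. First I would record that the hypotheses of the corollary put us in the situation described at the end of Section~\ref{sec:frame}: if $f$ is an immersion and $L(f)$ consists of lightlike points of the first kind, then $f$ is a mixed type surface which is admissible, and there is a vector field $l$ on $U$ with $df(l(q))=\bL(q)$ for $q\in L(f)$ and $\beta=l\inner{df(l)}{df(l)}|_{L(f)}$ nowhere zero along $L(f)$. In particular Theorem~\ref{thm:singsurf} applies, so $f_L$ and $f_N$ are fronts and the singular sets and the cuspidal edge / swallowtail conditions are as stated there; it only remains to translate the analytic conditions of that theorem into the curvatures of \cite{HST}.

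Next I would use the freedom in the choice of $\bL$. Replacing $l$ by $\beta^{-1/3}l$ --- equivalently, taking $\overline{\bL}=\psi\bL$ with $\psi=\beta^{-1/3}$ in the notation of \eqref{eq:alphadepend} --- achieves $\beta=1$, and then \eqref{eq:frenet2} reduces to $\alpha_L=\kappa_L$, $\alpha_N=\kappa_N$, $\alpha_G=\kappa_G$. Hence $\sigma_L=\alpha_L'+\alpha_L\alpha_G$ equals $\kappa_L'+\kappa_L\kappa_G=\widetilde\sigma_L$, and similarly $\sigma_N=\widetilde\sigma_N$; the identifier of singularities $1-v\alpha_L$ becomes $1-v\kappa_L$, and the singular value $1/\alpha_L$ becomes $1/\kappa_L$ (and likewise for $f_N$). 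Substituting these identities into the conclusions of Theorem~\ref{thm:singsurf} produces verbatim the conclusions of Corollary~\ref{cor:singsurf}.

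Finally I would check that the resulting statement does not depend on the normalization $\beta=1$, so that it is genuinely phrased in terms of invariants of $f$. This is exactly the content of the remark following Theorem~\ref{thm:singsurf}: the position of $S(f_L)$ and the cuspidal edge / swallowtail alternative are unaffected by $\bL\mapsto\psi\bL$, and by \eqref{eq:sigmaindep} we have $\overline{\sigma_L}=\psi\sigma_L$ and $\overline{\sigma_N}=\psi^{-1}\sigma_N$. Since $\psi$ never vanishes, $\sigma_L$ and $\overline{\sigma_L}$ have the same zero set, and at a common zero the Leibniz rule gives $\overline{\sigma_L}'=\psi\,\sigma_L'$, so the swallowtail condition ``$\sigma_L=0$ and $\sigma_L'\ne0$'' is also frame-independent (and similarly for $N$). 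I do not expect any real obstacle; the only point that needs a line of care is precisely this behavior of the first-derivative conditions under rescaling in the swallowtail case, which the product rule settles.
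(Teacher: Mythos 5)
Your proof is correct and is exactly the argument the paper intends: the paper gives no separate proof of this corollary, since it follows immediately from Theorem~\ref{thm:singsurf} together with the normalization $\beta=1$ (rechoosing $\beta^{-1/3}l$), under which \eqref{eq:frenet2} gives $\alpha_L=\kappa_L$, $\alpha_N=\kappa_N$, $\alpha_G=\kappa_G$ and hence $\sigma_L=\widetilde\sigma_L$, $\sigma_N=\widetilde\sigma_N$. Your additional verification that the swallowtail condition is frame-independent, via the product rule applied to \eqref{eq:sigmaindep} at a zero of $\sigma_L$, is a correct detail that the paper only gestures at in the remark following Theorem~\ref{thm:singsurf}.
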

The functions $\sigma_L$ and
$\sigma_N$ correspond to the invariants
$k'\mp k\tau$ in \cite{izuframe} which play an important role
in their paper.
In \cite{izuframe}, they use the Frenet-Serret type frame,
this is an invariant of the curve in $\R^3_1$.

We can state the condition of lightcone pedal curves
in terms of $\alpha_L$ and $\alpha_N$.

\begin{theorem}\label{thm:singpedal}
Under the same setting in Theorem\/ {\rm \ref{thm:singsurf}},
the point\/ $\gamma(0)$ is a singular point of\/ 
the osculating lightcone pedal
$LP_L$ 
$($respectively, 
the transversal lightcone pedal $LP_N)$ 
if and only if\/
$\alpha_L(0)=0$
$($respectively, $\alpha_N(0)=0)$.
Moreover, $LP_L$ $($respectively, $LP_N)$
has a cusp at\/ $\gamma(0)$ if and only if\/
$\alpha_L(0)=0$, $\alpha_L'(0)\ne0$
$($respectively, $\alpha_N(0)=0$, $\alpha_N'(0)\ne0)$.
\end{theorem}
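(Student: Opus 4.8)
The plan is to compute the derivative of the lightcone pedal curves directly from their defining formulas and read off singularity conditions. Recall $LP_L(u)=\inner{\hat\bgamma(u)}{\widetilde{\bL}(u)}\,\widetilde{\bL}(u)$, where $\widetilde{\bL}$ denotes the normalization of $\bL$ lying in $S^1_+$ (so $\widetilde{\bL}$ differs from $\bL$ by a scalar function, and by the remark after Theorem \ref{thm:singsurf} together with \eqref{eq:alphadepend}, the vanishing of $\alpha_L$ is independent of this rescaling). Writing $\phi(u)=\inner{\hat\bgamma(u)}{\widetilde{\bL}(u)}$, we get $LP_L'=\phi'\widetilde{\bL}+\phi\widetilde{\bL}'$. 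First I would use the Frenet-Serret type formula of Section \ref{sec:frame} to express $\bL'$ (hence $\widetilde{\bL}'$ up to the scalar factor) in terms of $\e,\bL,\N$: from the matrix, $\bL'=\alpha_N\e-\alpha_G\bL$. Then $\widetilde{\bL}'$ is a combination of $\widetilde{\bL}$ and $\e$ whose $\e$-component is a nonzero multiple of $\alpha_N$ — but wait, I must be careful which invariant governs $LP_L$. Since $LP_L$ is built from $\widetilde{\bL}$, the relevant Frenet coefficient is the one controlling how $\bL$ turns, and inspection of the frame equations shows this is $\alpha_L$ appearing in $\e'=-\alpha_L\bL-\alpha_N\N$; I expect the computation to reorganize so that $LP_L'$ is a (nonzero-functional) multiple of $\alpha_L$ times a fixed nonvanishing vector, modulo the $\widetilde{\bL}$-direction, which would give exactly ``$\gamma(0)$ singular $\iff\alpha_L(0)=0$.''

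The key steps, in order: (1) differentiate $LP_L$ and substitute the Frenet-Serret formula to write $LP_L'=(\text{scalar})\,\widetilde{\bL}+(\text{scalar involving }\alpha_L)\,\e$; (2) observe that $\widetilde{\bL}$ and $\e$ are linearly independent (they are part of a frame by \eqref{eq:condln}), so $LP_L'(0)=\bo$ forces the $\e$-coefficient to vanish, and show this coefficient is a nonvanishing function times $\alpha_L$; this proves the first claim. (3) For the cusp criterion, recall that a plane curve $c(u)$ with $c'(0)=\bo$ has an ordinary cusp ($3/2$-cusp) at $u=0$ iff $c''(0)$ and $c'''(0)$ are linearly independent. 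So I differentiate once more: on the locus $\alpha_L=0$ we have $LP_L''=(\text{scalar})\widetilde{\bL}+\alpha_L'(\cdots)\e+\alpha_L(\cdots)$, and at $u=0$ the $\e$-component of $LP_L''(0)$ is a nonzero multiple of $\alpha_L'(0)$. (4) Check that $LP_L'''(0)$ supplies a component transverse to $LP_L''(0)$ (generically in the $\widetilde{\bL}$ or $\N$ direction, independent of the $\e$-direction), so that $\alpha_L(0)=0,\ \alpha_L'(0)\ne0$ is exactly the cusp condition; when $\alpha_L(0)=0$ but $\alpha_L'(0)=0$ the curve has a higher-order singularity. The argument for $LP_N$ is identical, replacing $\widetilde{\bL}$ by $\widetilde{\N}$, $\alpha_L$ by $\alpha_N$, and using $\N'=-\alpha_N\e+\alpha_G\N$ in place of the $\bL$-equation.

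The main obstacle I anticipate is bookkeeping the rescaling: $LP_L$ is defined via $\widetilde{\bL}\in S^1_+$, not via the frame vector $\bL$, so I must track the normalizing factor $\psi$ with $\widetilde{\bL}=\psi\bL$ carefully and confirm (via \eqref{eq:alphadepend}, or rather the precise relation between $\widetilde{\bL}$ and $\bL$) that the $\e$-component of $LP_L'$ is a \emph{nonvanishing} multiple of $\alpha_L$ — the factor $\psi$ itself never vanishes, so this should go through, but one must make sure no extra factor of $\alpha_L$ or $\alpha_G$ sneaks in and no cancellation occurs. A secondary point is verifying that the ``remaining'' derivative components ($LP_L''(0)$ and $LP_L'''(0)$ beyond their $\e$-parts) are genuinely transverse, i.e.\ that the cusp is not degenerate for other reasons; this amounts to a short linear-algebra check using \eqref{eq:condln} that $\{\widetilde{\bL},\e\}$ (or $\{\widetilde{\N},\e\}$) already span the plane in which the pedal curve lives, so any nonzero $\e$-component of the second derivative not parallel to the first derivative forces the ordinary cusp. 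Once the frame identities are substituted cleanly, both statements are immediate.
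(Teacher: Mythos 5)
Your route is genuinely different from the paper's: you differentiate the pedal curve directly and invoke the elementary plane-curve criteria ($c'\ne0$ for regularity; $c'(0)=0$ with $c''(0),c'''(0)$ independent for a cusp), whereas the paper never touches $LP_L'$ at all --- it computes the low-order derivatives of the \emph{height function} $\widetilde h(u)=\widetilde H(u,\widetilde{\bv}_0,r_0)$, checks that $\widetilde H$ is a versal unfolding, and then reads off the local model of the discriminant set from the general theory, transporting it to $LC^*$ by the diffeomorphism $\Phi$. Your approach is more elementary and self-contained; the paper's buys the cusp normal form for free once versality is checked. Before comparing further, fix the frame identities: from the Frenet--Serret matrix one has $\e'=\alpha_N\bL+\alpha_L\N$, $\bL'=-\alpha_L\e-\alpha_G\bL$, $\N'=-\alpha_N\e+\alpha_G\N$, not the formulas you quote. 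With the correct ones your step (1)--(2) does work out cleanly: writing $\widetilde{\bL}=\psi\bL$ with $\psi=1/L_0$ and using that $\widetilde{\bL}'$ has vanishing first component, one finds $\widetilde{\bL}'=\psi\alpha_L\,\bw$ with $\bw=(e_0/L_0)\bL-\e\ne\bo$, hence $LP_L'=\psi\alpha_L\bigl(\inners{\hat\bgamma}{\bw}\widetilde{\bL}+\inners{\hat\bgamma}{\widetilde{\bL}}\bw\bigr)$, and the bracket is nonzero because $\widetilde{\bL},\bw$ are independent and $\inners{\hat\bgamma}{\widetilde{\bL}}\ne0$ (needed anyway for $LP_L$ to lie in $LC^*$). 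This gives the first assertion.

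The genuine gap is in your step (4). When $\alpha_L(0)=0$ both $LP_L''(0)$ and $LP_L'''(0)$ lie in the plane spanned by $\e(0)$ and $\bL(0)$ (your expectation of a transverse component ``in the $\widetilde{\bL}$ or $\N$ direction'' is not what happens: the $\N$-contribution from $\e'$ enters only with a factor $\alpha_L$, which vanishes there). So the cusp condition is a $2\times2$ determinant inside that plane, and unwinding it one gets $\det\bigl(LP_L''(0),LP_L'''(0)\bigr)=2\,\alpha_L'(0)^2\cdot(\text{const})\cdot\inner{\e}{\partial_\theta\widetilde{\bv}}|_{u=0}$ in suitable coordinates, where $\partial_\theta\widetilde{\bv}=(0,-\sin\theta,\cos\theta)$ is the tangent of $S^1_+$ at $\widetilde{\bL}(0)$. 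Your argument is complete only if you prove that this last factor never vanishes --- ``generically'' is not enough, since the theorem claims an exact equivalence with $\alpha_L'(0)\ne0$. This is precisely the versality condition the paper verifies: it shows that if the first component of $\e\times\widetilde{\bv}$ vanished then $\e$ would be lightlike, contradicting $\inner{\e}{\e}=1$. Equivalently, from $\inners{\e}{\widetilde{\bL}}=0$ one gets $\inner{\e}{\partial_\theta\widetilde{\bv}}=\pm\inner{\e}{\bw}/|\bw|_E=\mp1/|\bw|_E\ne0$. You must also include the converse check that $\alpha_L(0)=\alpha_L'(0)=0$ forces $LP_L''(0)=\bo$ (it does: both the $\bw$-component $\phi\psi\alpha_L'$ and the coefficient $\phi''(0)$ vanish), so no cusp can occur then. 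With these two points supplied, your proof closes; without them, only the ``only if'' direction of the singular-point statement and one implication of the cusp statement are established.
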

Like as we remarked just after Theorem \ref{thm:singsurf},
the conditions in this theorem do not
depend on the choice of $\bL$.
\begin{proof}
We see the function $\widetilde{h}(u)=\widetilde{H}(u,\widetilde{\bv}_0,r_0)$,
where
$r_0=\inners{\hat\bgamma(0)}{\widetilde{\bL}(0)}$,
$\widetilde{\bv}_0=\widetilde{\bL}(0)$
satisfies
\begin{align*}
\widetilde{h}'(0)&=\inners{\e}{\widetilde{\bv}_0}=0,\\
\widetilde{h}''(0)&=
\inners{\alpha_N\bL+\alpha_L\N}{\widetilde{\bv}_0}=\alpha_L(0)/L_0(0)
\quad(\bL=(L_0,L_1,L_2)),\\
\widetilde{h}'''(0)&=\inners{-2\alpha_L\alpha_N\e-\sigma_N\bL+\sigma_L\N}
{\widetilde{\bv}_0}=\sigma_L(0)/L_0(0).
\end{align*}
Taking a parametrization $\theta$ of $S^1_+$ by 
$\widetilde{\bv}=(1,\cos\theta,\sin\theta)$, then
$\widetilde{H}'_\theta(u,\widetilde{\bv},r)$ is the first component of
$\be\times \widetilde{\bv}$.
Then we see 
$\widetilde{h}''(0)=0$ if and only if $\alpha_L(0)=0$.
If $\widetilde{h}''(0)=0$, then
$\widetilde{h}'''(u)=0$ if and only if $\alpha_L'(0)=0$.
Furthermore, 
if $\widetilde{h}''(0)=0$ and 
$\widetilde{h}'''(u)\ne0$, then 
$\widetilde{H}(u,\widetilde{\bv},r)$ is a versal unfolding of
$\widetilde{h}$.
In fact, we assume 
the first component of $\be\times \widetilde{\bv}$ is zero,
then $\be\times \widetilde{\bv}$ is lightlike,
$e_0^2+e_1^2+e_2^2-2e_0(e_1\cos\theta+e_2\sin\theta)=0$
for $\e=(e_0,e_1,e_2)$.
On the other hand, since $\inners{\e}{\widetilde L}=0$,
we have $-e_0+e_1\cos\theta+e_2\sin\theta=0$.
This implies that $-e_0^2+e_1^2+e_2^2=0$, a contradiction.
By the well-known fact of the versal unfolding of
a function and its discriminant set (see \cite{Bru-Gib} for example),
if $\alpha_L\ne0$ (respectively, $\alpha_L=0$, $\alpha_L'\ne0$)
at $0$ then
${\cal D}_{\widetilde{H}}$ is locally diffeomorphic to
a regular curve (respectively, a $3/2$-cusp)
at $\big(\widetilde{\N}(0),\inners{\hat\bgamma(0)}{\widetilde{\N}(0)}\big)$.
By the diffeomorphism $\Phi:S^1_+\times\R^\times\to LC^*$,
the discriminant set
${\cal D}_{\widetilde{H}}$ is sent to 
the union of the images of $LP_L(u)$ and $LP_N(u)$.
Since the diffeomorphicity of the images implies the ${\cal A}$-equivalence
and $\widetilde{\bL}$ and $\widetilde{\N}$ are linearly independent,
we have the assertion.
\end{proof}
Like as Corollary \ref{cor:singsurf},
if $f$ is an immersion
and $L(f)$ consists 
of lightlike points of the first kind,
then the conditions in Theorem \ref{thm:singpedal}
can be stated in terms of $\kappa_L,\kappa_N,\kappa_G$
as follows.
\begin{corollary}\label{cor:singpedal}
Under the same setting in Corollary\/ {\rm \ref{cor:singsurf}},
the point\/ $\gamma(0)$ is a singular point of
the osculating lightcone pedal\/
$LP_L$ 
$($respectively, 
the transversal lightcone pedal\/ $LP_N)$ 
if and only if\/
$\kappa_L(0)=0$
$($respectively, $\kappa_N(0)=0)$.
Moreover, $LP_L$ $($respectively, $LP_N)$
has a cusp at\/ $\gamma(0)$ if and only if\/
$\kappa_L(0)=0$, $\kappa_L'(0)\ne0$
$($respectively, $\kappa_N(0)=0$, $\kappa_N'(0)\ne0)$.
\end{corollary}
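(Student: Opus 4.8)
The plan is to deduce this corollary directly from Theorem~\ref{thm:singpedal} by translating the invariants $\alpha_L,\alpha_N,\alpha_G$ into the curvatures $\kappa_L,\kappa_N,\kappa_G$ via the relations \eqref{eq:frenet2}. First I would recall that, by the discussion in Section~\ref{sec:frame}, an immersion $f$ whose lightlike point set $L(f)$ consists of lightlike points of the first kind is an admissible frontal, so Theorem~\ref{thm:singpedal} applies: $\gamma(0)$ is a singular point of $LP_L$ (respectively, $LP_N$) if and only if $\alpha_L(0)=0$ (respectively, $\alpha_N(0)=0$), with a cusp exactly when moreover $\alpha_L'(0)\ne0$ (respectively, $\alpha_N'(0)\ne0$).

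Next I would invoke \eqref{eq:frenet2}: there is a vector field $l$ with $df(l)=\bL$ along $L(f)$ and a nonvanishing function $\beta$ with $\alpha_L=\beta^{1/3}\kappa_L$, $\alpha_N=\beta^{-1/3}\kappa_N$, $\alpha_G=\kappa_G+\beta^{1/3}(\beta^{-1/3})'$. The cleanest route is to rechoose $l$ so that $\beta\equiv 1$, as noted just after \eqref{eq:frenet2}, which gives $\alpha_L=\kappa_L$, $\alpha_N=\kappa_N$, $\alpha_G=\kappa_G$. Since the remark after Theorem~\ref{thm:singsurf}, and the one after Theorem~\ref{thm:singpedal}, guarantees that the singularity conditions are independent of the choice of $\bL$, hence of $l$, substituting these equalities into Theorem~\ref{thm:singpedal} yields precisely the stated conditions in terms of $\kappa_L,\kappa_N,\kappa_G$; here $\widetilde\sigma_L,\widetilde\sigma_N$ play the role of $\sigma_L,\sigma_N$ from Theorem~\ref{thm:singsurf}, though in fact only $\kappa_L,\kappa_N$ and their first derivatives enter the pedal statement.

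Alternatively, and without any rechoice of $l$, I would argue pointwise: since $\beta$ never vanishes, $\beta^{1/3}$ and $\beta^{-1/3}$ are nonvanishing smooth functions, so $\alpha_L(0)=0\iff\kappa_L(0)=0$ and $\alpha_N(0)=0\iff\kappa_N(0)=0$; and differentiating $\alpha_L=\beta^{1/3}\kappa_L$ at a point where $\kappa_L(0)=0$ gives $\alpha_L'(0)=\beta^{1/3}(0)\,\kappa_L'(0)$, so $\alpha_L'(0)\ne0\iff\kappa_L'(0)\ne0$, and symmetrically for $N$. There is essentially no obstacle here; the only point requiring a moment of care is that the cusp criterion involves a first derivative, so one must observe that the correction term in $\alpha_L'=(\beta^{1/3})'\kappa_L+\beta^{1/3}\kappa_L'$ drops out precisely because $\kappa_L$ vanishes at the point in question, which is exactly the case under consideration. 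Combining either argument with Theorem~\ref{thm:singpedal} completes the proof.
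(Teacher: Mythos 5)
Your proposal is correct and follows exactly the route the paper intends: the corollary is presented as an immediate consequence of Theorem~\ref{thm:singpedal} via the relations \eqref{eq:frenet2} and the independence of the conditions from the choice of $\bL$. Your pointwise check that $\alpha_L'(0)=\beta^{1/3}(0)\,\kappa_L'(0)$ when $\kappa_L(0)=0$ (and likewise for $N$) supplies the one detail worth making explicit, and nothing further is needed.
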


\subsection{Contact of $L(f)$ with intersection curves}\label{sec:contactinter}
In this section, we study a lightlike locus by considering
contact of intersection curves of two model surfaces
defined by the lightlike locus with the frame defined in Section \ref{sec:frame}.
Let $f:U\to \R^3_1$ be an admissible frontal.
Regarding the discriminants of $G,\widetilde H$ 
in \eqref{eq:funcg}, \eqref{eq:functh},
under the notation in Section \ref{sec:frame}, 
we consider the following functions
\begin{align*}
G_L(\bx)&=\inner{\bx-\bx_L}{\bx-\bx_L},\quad
G_N(\bx)=\inner{\bx-\bx_N}{\bx-\bx_N},\\
\widetilde{H}_L(\bx)&=
\inner{\bx-\hat\bgamma(0)}{\widetilde{\bL(0)}},\quad
\widetilde{H}_N(\bx)=
\inner{\bx-\hat\bgamma(0)}{\widetilde{\N(0)}}
\end{align*}
from $\R^3_1$ into $\R$, where 
$\bx_L=\hat\bgamma(0)+\dfrac{\bL(0)}{\alpha_L(0)}$,
$\bx_N=\hat\bgamma(0)+\dfrac{\N(0)}{\alpha_N(0)}$
and
$\widetilde\bv=\bv/v_0$ for $\bv=(v_0,v_1,v_2)$.
We also remark here that $\bx_L$ and $\bx_N$ do not depend on the choice of
$\bL$ (cf. \eqref{eq:alphadepend}).
Then $G_L^{-1}(0)$ (respectively, $G_N^{-1}(0)$)
is the lightcone with the vertex $\bx_L$ (respectively, $\bx_N$),
and 
$\widetilde{H}_L^{-1}(0)$ (respectively, $\widetilde{H}_N^{-1}(0)$) 
is the lightlike plane
with the lightlike normal vector $\widetilde{\bL(0)}$
(respectively, $\widetilde{\N(0)}$) 
passing through $\hat\bgamma(0)$.
We call $G_L^{-1}(0)$, 
(respectively, $G_N^{-1}(0)$, $\widetilde{H}_L^{-1}(0)$,
$\widetilde{H}_N^{-1}(0)$)
the {\it osculating contact lightcone\/}
(respectively,
{\it transversal contact lightcone\/},
{\it osculating contact lightlike plane\/}
{\it transversal contact lightlike plane\/}) at $p$.
We consider the intersections of two of them.
The intersection of the
osculating contact lightcone and the transversal contact lightcone,
is a spacelike ellipse tangent to $L(f)$ at $p$, which is called an {\it osculating ellipse}.
The intersection of the
osculating contact  lightlike plane  and the transversal 
contact lightlike plane,
is a spacelike line tangent to $L(f)$ at $p$, which is the tangent line.
The intersection of the
osculating contact lightcone and the transversal contact lightlike plane,
(respectively, the osculating contact lightlike plane and 
the transversal contact lightcone)
is a spacelike parabola tangent to $L(f)$ at $p$, which is called an {\it $N$-osculating} (respectively, an {\it $L$-osculating}) parabola.
Since these curves are intersections of fundamental objects
in the Lorentz-Minkowski $3$-space,
they can be regarded as model curves of a lightlike locus of
a surface.
In fact, we can interpret that the spacelike ellipse represents how $L(f)$ looks round and
the line represents how $L(f)$ looks flat.
Moreover, the spacelike parabola represents how $L(f)$ looks semi-flat with respect to $\bL$ or $\N$.

We set
$$
g_L(u)=G_L(\hat\bgamma(u)),\ 
g_N(u)=G_N(\hat\bgamma(u)),\ 
\widetilde h_L(u)=\widetilde H_L(\hat\bgamma(u)),\ 
\widetilde h_N(u)=\widetilde H_N(\hat\bgamma(u)).
$$
\begin{definition}
Let $F:(\R^3_1,\hat\bgamma(0))\to(\R,0)$ be a function.
Then $F^{-1}(0)$ and $\hat\bgamma$ have a $k$-{\it point contact\/} at $u=0$
if $f=F\circ\hat\bgamma$ satisfies 
$f'=\cdots=f^{(k+1)}=0$, $f^{(k+2)}\ne0$ at $u_0$.
Let $F_j:(\R^3_1,\hat\bgamma(0))\to(\R,0)$ $(j=1,2)$ be two functions.
Then $F_1^{-1}(0)\cap F_2^{-1}(0)$ and
$\hat\bgamma$ have a $(k_1,k_2)$-{\it point contact\/} 
if $F_i^{-1}(0)$ and $\hat\bgamma$ have a $k_i$-point contact at $u=0$
for $i=1,2$.
\end{definition}
Applying this to $f=g_L,g_N,\widetilde h_L,\widetilde h_N$,
we see that
 the contact with the lightcone with the vertex $\bx_L$ (respectively, $\bx_N$)
can be
measured by $\sigma_L$ (respectively, $\sigma_N$),
and
the contact with the lightlike plane 
with the lightlike normal vector $\widetilde{\bL(0)}$ 
(respectively, $\widetilde{\N(0)}$),
passing through $\hat\bgamma(0)$
can be
measured by the invariant $\alpha_L$ (respectively, $\alpha_N$).
More precisely, the following theorem holds. 
\begin{theorem}\label{thm:ghcont}
{\rm (1)} 
The function\/ $g_L$ satisfies\/ $g_L=g_L'=\cdots=g_L^{(k)}=0$ 
at\/ $0$ if and only if\/
$\sigma_L=\sigma_L'=\cdots=\sigma_L^{(k-3)}=0$ at\/ $0$ for any\/ $k\geq3$.
Similarly, the function\/ $g_N$ satisfies\/ 
$g_N=g_N'=\cdots=g_N^{(k)}=0$ at\/ $0$ if and only if\/
$\sigma_N=\sigma_N'=\cdots=\sigma_N^{(k-3)}=0$
at\/ $0$ for any\/ $k\geq3$.

{\rm (2)} The function\/ $\widetilde h_L$ satisfies\/ 
$\widetilde h_L=\widetilde h_L'=\cdots=\widetilde h_L^{(k)}=0$ at\/ 
$0$ if and only if\/
$\alpha_L=\alpha_L'=\cdots=\alpha_L^{(k-2)}=0$ at\/ $0$ for any\/ $k\geq2$.
Similarly the function\/ $\widetilde h_N$ satisfies\/
$\widetilde h_N=\widetilde h_N'=\cdots=\widetilde h_N^{(k)}=0$
at\/ $0$ if and only if\/
$\alpha_N=\alpha_N'=\cdots=\alpha_N^{(k-2)}=0$ at\/ $0$ for any\/ $k\geq2$.
\end{theorem}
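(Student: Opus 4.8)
The plan is to reduce the whole statement to computing the order of vanishing at $u=0$ of the four one‑variable functions $g_L,g_N,\widetilde h_L,\widetilde h_N$, using that a $C^\infty$ germ $a(u)$ satisfies $a=a'=\dots=a^{(k)}=0$ at $0$ exactly when $\operatorname{ord}_0 a\ge k+1$ (here $\operatorname{ord}_0$ is the order of vanishing at $0$, with $\operatorname{ord}_0 a=\infty$ for a flat germ). Thus part (1) is the identity $\operatorname{ord}_0 g_L=\operatorname{ord}_0\sigma_L+3$ (and its $\sigma_N$ analogue), and part (2) is $\operatorname{ord}_0\widetilde h_L=\operatorname{ord}_0\alpha_L+2$ (and its $\alpha_N$ analogue). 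The computation of these orders will be carried out by expressing the fixed data defining $G_L,G_N,\widetilde H_L,\widetilde H_N$ — the vertices $\bx_L,\bx_N$ and the lightlike directions $\widetilde{\bL(0)},\widetilde{\N(0)}$ — in the moving frame $\{\e(u),\bL(u),\N(u)\}$ of Section~\ref{sec:frame}, converting their constancy into a linear first‑order ODE system for the frame components and then tracking orders of vanishing through that system with the Frenet–Serret formula. As a sanity check, $\operatorname{ord}_0\sigma_L\ge0$ forces $\operatorname{ord}_0 g_L\ge3$, so $g_L=g_L'=g_L''=0$ automatically (hence $k\ge3$ in (1)), and $\operatorname{ord}_0\widetilde h_L\ge2$ automatically (hence $k\ge2$ in (2)).

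For part (1) I would set $\bw(u)=\hat\bgamma(u)-\bx_L=A(u)\e(u)+B(u)\bL(u)+C(u)\N(u)$, so $g_L=\inner{\bw}{\bw}=A^2+2BC$ by \eqref{eq:condln}. Since $\bw'=\hat\bgamma'=\e$, the Frenet–Serret formula gives
\[
A'=1+B\alpha_L+C\alpha_N,\qquad B'=B\alpha_G-A\alpha_N,\qquad C'=-A\alpha_L-C\alpha_G,
\]
with $A(0)=0$, $B(0)=-1/\alpha_L(0)$, $C(0)=0$ (recall $\alpha_L(0)\neq0$ here). The first equation forces $A'(0)=0$, so $\operatorname{ord}_0 A=:m\ge2$; the third, with $C(0)=0$ and $\alpha_L(0)\neq0$, gives $\operatorname{ord}_0 C=m+1$. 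Differentiating the first equation and substituting the other two produces the identity
\[
A''=B\sigma_L+C\sigma_N-2A\alpha_L\alpha_N .
\]
Here $C\sigma_N$ and $A\alpha_L\alpha_N$ have orders $\ge m+1$ and $\ge m$, both exceeding $\operatorname{ord}_0 A''=m-2$, while $\operatorname{ord}_0(B\sigma_L)=\operatorname{ord}_0\sigma_L$ since $B(0)\neq0$; hence $\operatorname{ord}_0\sigma_L=m-2$, i.e.\ $m=\operatorname{ord}_0\sigma_L+2$. Finally $\operatorname{ord}_0(A^2)=2m>m+1=\operatorname{ord}_0(2BC)$, so $\operatorname{ord}_0 g_L=m+1=\operatorname{ord}_0\sigma_L+3$. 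The case $g_N$ is the mirror image: taking $\bx_N$ gives $B(0)=0$, $C(0)=-1/\alpha_N(0)\neq0$, so $\operatorname{ord}_0 B=m+1$, the same identity yields $\operatorname{ord}_0\sigma_N=m-2$, and $\operatorname{ord}_0 g_N=m+1=\operatorname{ord}_0\sigma_N+3$.

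For part (2), note $\widetilde h_L(0)=0$ and $\widetilde h_L'(u)=\inner{\hat\bgamma'(u)}{\widetilde{\bL(0)}}=\inner{\e(u)}{\widetilde{\bL(0)}}$, so $\operatorname{ord}_0\widetilde h_L=\operatorname{ord}_0\widetilde h_L'+1$. Writing the constant vector $\widetilde{\bL(0)}=\lambda(u)\e(u)+\mu(u)\bL(u)+\rho(u)\N(u)$, \eqref{eq:condln} gives $\widetilde h_L'(u)=\lambda(u)$, and differentiating $\widetilde{\bL(0)}$ (constant) yields
\[
\lambda'=\mu\alpha_L+\rho\alpha_N,\qquad \mu'=\mu\alpha_G-\lambda\alpha_N,\qquad \rho'=-\lambda\alpha_L-\rho\alpha_G,
\]
with $\lambda(0)=0$, $\mu(0)=1/L_0(0)\neq0$, $\rho(0)=0$ (here $\bL=(L_0,L_1,L_2)$, $L_0\neq0$ as $\bL$ is lightlike and nonzero). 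From $\rho(0)=0$ the third equation gives $\operatorname{ord}_0\rho=\operatorname{ord}_0\lambda+\operatorname{ord}_0\alpha_L+1\ge\operatorname{ord}_0\alpha_L+2$; then in $\lambda'=\mu\alpha_L+\rho\alpha_N$ the term $\mu\alpha_L$ has order exactly $\operatorname{ord}_0\alpha_L$ (as $\mu(0)\neq0$), strictly below the order of $\rho\alpha_N$, so $\operatorname{ord}_0\lambda-1=\operatorname{ord}_0\alpha_L$, giving $\operatorname{ord}_0\widetilde h_L=\operatorname{ord}_0\lambda+1=\operatorname{ord}_0\alpha_L+2$. The assertion for $\widetilde h_N$ follows verbatim with $\widetilde{\N(0)}$ in place of $\widetilde{\bL(0)}$: the only changes are $\rho(0)=1/N_0(0)\neq0$ and $\mu(0)=0$, so the surviving term in $\lambda'=\mu\alpha_L+\rho\alpha_N$ is $\rho\alpha_N$ and $\operatorname{ord}_0\widetilde h_N=\operatorname{ord}_0\alpha_N+2$.

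I expect the main obstacle to be the order‑of‑vanishing bookkeeping: at each step one must be sure the leading contribution is not destroyed by cancellation, and this hinges on the non‑vanishing at $0$ of exactly one frame component of $\bw(0)$ (namely $B(0)$, resp.\ $C(0)$) together with $\alpha_L(0)\neq0$ (resp.\ $\alpha_N(0)\neq0$) in part (1), and on $\mu(0)\neq0$ (resp.\ $\rho(0)\neq0$) in part (2) — these are precisely what make the competing terms have strictly different orders. The flat case (some coefficient function vanishing to infinite order) is handled by the same reasoning with the convention $\operatorname{ord}_0=\infty$ and is consistent with both sides of each equivalence holding for every $k$. The only genuinely computational input is the identity $A''=B\sigma_L+C\sigma_N-2A\alpha_L\alpha_N$, a one‑line substitution into the first‑order system; it is this identity that explains why the contact with the lightcones is governed by $\sigma_L,\sigma_N$, whereas the contact with the lightlike planes is read off directly from $\alpha_L,\alpha_N$.
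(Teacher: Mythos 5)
Your argument is correct and is essentially the paper's proof in different clothing: your identity $A''=B\sigma_L+C\sigma_N-2A\alpha_L\alpha_N$ is exactly the paper's expansion of $g_L^{(3)}$ in \eqref{eq:gdiff} (with $A=\inner{\e}{\hat\bgamma-\bx_L}$, $C=\beta_L$, $B=\inner{\N}{\hat\bgamma-\bx_L}$), and your order-of-vanishing estimates $\operatorname{ord}_0C=\operatorname{ord}_0A+1$ and $\operatorname{ord}_0\rho\ge\operatorname{ord}_0\alpha_L+2$ play precisely the roles of Lemma \ref{lem:ghzero} and Lemma \ref{lem:tildehl}. The only difference is presentational: you package the appendix's double induction as the single equalities $\operatorname{ord}_0g_L=\operatorname{ord}_0\sigma_L+3$ and $\operatorname{ord}_0\widetilde h_L=\operatorname{ord}_0\alpha_L+2$, which cleanly yields both implications at once.
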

As we mentioned in \eqref{eq:sigmaindep}, the condition
$\sigma_L=\sigma_L'=\cdots=\sigma_L^{(k-3)}=0$ 
(respectively, $\sigma_N=\sigma_N'=\cdots=\sigma_N^{(k-3)}=0$) 
at\/ $0$ for any\/ $k\geq3$ does not depend on the choice of $\bL$.
Proof of this theorem is not difficult, but
complicated a little, we give a proof of this 
theorem in Appendix \ref{sec:proofthm}.
In \cite{izuframe}, a similar consideration by using
the Frenet frame along a curve in $\R^3_1$ is given.
See \cite[Propositions 2.1 and 2.2]{izuframe}.

By Theorem \ref{thm:ghcont} together with
Theorems \ref{thm:singsurf} and \ref{thm:singpedal},
we can conclude the contact of model curves
can be measured by the singularities 
of lightlike surfaces and lightcone pedals.
For the sake of simplified description,
we set the following terminology.
A frontal $f:(\R^2,p)\to\R^3$ has an $A_2$-{\it point\/} 
(respectively, $A_3$-{\it point\/}) at $p$ if
$f$ at $p$ is cuspidal edge (respectively, swallowtail).
A frontal $c:(\R,p)\to\R^2$ has an $A_1$-{\it point\/} 
(respectively, $A_2$-{\it point\/}) at $p$ if
$c$ at $p$ is regular (respectively, a cusp).
\begin{corollary}\label{cor:contsing}
Let\/ $f:U\to \R^3_1$ be an admissible frontal.
Then the following hold for\/ $k_1,k_2=2,3$.
\begin{itemize}
\item The curve\/ $\hat\bgamma$ and the osculating ellipse
have\/ $(k_1,k_2)$-point contact if and only if\/ the
osculating lightlike surface have an\/ $A_{k_1+1}$-point,
and the transversal lightlike surface have an $A_{k_2+1}$-point,
\item
the curve\/ $\hat\bgamma$ and the $N$-osculating parabola
have\/ $(k_1,k_2+1)$-point contact if and only if the
osculating lightlike surface have an\/ $A_{k_1+1}$-point,
and the
transversal lightcone pedal have an \/$A_{k_2+2}$-point,
\item
the curve\/ $\hat\bgamma$ and the $L$-osculating parabola
have\/ $(k_1+1,k_2)$-point contact if and only if the
osculating lightcone pedal have\/ $A_{k_1+2}$-point,
and
the transversal lightlike surface have\/ $A_{k_2+1}$-point,
\item
the curve\/ $\hat\bgamma$ and the tangent line 
have\/ $(k_1+1,k_2+1)$-point contact if and only if
osculating lightcone pedal have an\/ $A_{k_1+2}$-point,
and
transversal lightcone pedal have an\/ $A_{k_2+2}$-point.
\end{itemize}
\end{corollary}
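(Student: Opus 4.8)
The plan is to assemble Theorems~\ref{thm:singsurf}, \ref{thm:singpedal} and \ref{thm:ghcont} through a single dictionary. First I would record that each of the four model curves is the intersection of two of the level sets $G_L^{-1}(0)$, $G_N^{-1}(0)$, $\widetilde H_L^{-1}(0)$, $\widetilde H_N^{-1}(0)$: the osculating ellipse is $G_L^{-1}(0)\cap G_N^{-1}(0)$, the $N$-osculating parabola is $G_L^{-1}(0)\cap\widetilde H_N^{-1}(0)$, the $L$-osculating parabola is $\widetilde H_L^{-1}(0)\cap G_N^{-1}(0)$, and the tangent line is $\widetilde H_L^{-1}(0)\cap\widetilde H_N^{-1}(0)$, the first factor always being an ``osculating'' object and the second a ``transversal'' one. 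By the definition of $(k_1,k_2)$-point contact, the contact of $\hat\bgamma$ with such an intersection curve is the conjunction of the contact of $\hat\bgamma$ with each factor, so it suffices to prove two ``atomic'' equivalences: that $G_L^{-1}(0)$ (resp.\ $G_N^{-1}(0)$) has exactly $m$-point contact with $\hat\bgamma$ at $0$ if and only if $f_L$ (resp.\ $f_N$) has an $A_{m+1}$-point at the singular point over $\gamma(0)$; and that $\widetilde H_L^{-1}(0)$ (resp.\ $\widetilde H_N^{-1}(0)$) has exactly $m$-point contact with $\hat\bgamma$ at $0$ if and only if $LP_L$ (resp.\ $LP_N$) has an $A_{m+1}$-point at $\gamma(0)$. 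Granting these, each bullet follows by reading off the contact order of each factor --- which is $k_i$ for a ``cone'' factor and $k_i+1$ for a ``plane'' factor, exactly as the statement is phrased --- and substituting $m=k_i$ into the cone atom and $m=k_i+1$ into the plane atom.

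For the cone atom I would start from $g_L(0)=g_L'(0)=g_L''(0)=0$, which holds identically because the vertex $\bx_L$ is chosen so that $\hat\bgamma$ meets the lightcone $G_L^{-1}(0)$ to second order; hence ``exactly $m$-point contact'' unwinds to $g_L=g_L'=\dots=g_L^{(m+1)}=0$ and $g_L^{(m+2)}\ne0$. Applying Theorem~\ref{thm:ghcont}(1) at $k=m+1$ and at $k=m+2$ (both $\ge3$ in the range $m\in\{2,3\}$ needed here) converts this into $\sigma_L=\sigma_L'=\dots=\sigma_L^{(m-2)}=0$ and $\sigma_L^{(m-1)}\ne0$. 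On the singularity side, the computation in the proof of Theorem~\ref{thm:singsurf} (with $\Lambda=1-v\alpha_L$, $\eta=\partial_u+v\alpha_G\partial_v$) gives $\eta\Lambda\doteq\sigma_L$ and, once the lower $\sigma_L^{(j)}$ vanish, $\eta^{j}\Lambda\doteq\sigma_L^{(j-1)}$ at the singular point $(0,1/\alpha_L(0))$, together with the fact that $f_L$ is a front there. Feeding this into the criteria for $A_{m+1}$-front singularities --- for $m\le2$ this is precisely Theorem~\ref{thm:singsurf} (cuspidal edge when $\sigma_L\ne0$, swallowtail when $\sigma_L=0$, $\sigma_L'\ne0$), and for $m=3$ one uses the standard criterion for the cuspidal butterfly $A_4$, namely a front with $\eta\Lambda=\eta^2\Lambda=0$, $\eta^3\Lambda\ne0$ and $d\Lambda\ne0$, the last holding automatically at the singular point --- shows that $f_L$ has an $A_{m+1}$-point if and only if $\sigma_L=\dots=\sigma_L^{(m-2)}=0$, $\sigma_L^{(m-1)}\ne0$. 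Comparing, the cone atom follows; the $f_N$ version is identical with $\sigma_N$, $\alpha_N$ and $\eta=\partial_u-v\alpha_G\partial_v$.

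For the plane atom the pattern is the same: since $\widetilde h_L(0)=\widetilde h_L'(0)=0$ identically, ``exactly $m$-point contact'' means $\widetilde h_L=\dots=\widetilde h_L^{(m+1)}=0$, $\widetilde h_L^{(m+2)}\ne0$, and Theorem~\ref{thm:ghcont}(2) (applied at $k=m+1,m+2\ge2$, the relevant values being $m\in\{3,4\}$) turns this into $\alpha_L=\alpha_L'=\dots=\alpha_L^{(m-1)}=0$, $\alpha_L^{(m)}\ne0$. That this characterizes an $A_{m+1}$-point of $LP_L$ is Theorem~\ref{thm:singpedal} for $m=0,1$ (regular curve, resp.\ cusp); for the values $m=3,4$ one extends its proof, using that the proof of Theorem~\ref{thm:singpedal} exhibits $\widetilde H$ as a nondegenerate unfolding of $\widetilde h_L$ --- the non-vanishing of the first component of $\be\times\widetilde\bv$ is exactly what secures this --- so that, by the standard description of the discriminant of such an unfolding (or, more robustly, by a direct Taylor expansion of $LP_L$), ${\cal D}_{\widetilde H}$ is near $\big(\widetilde{\bL}(0),\inners{\hat\bgamma(0)}{\widetilde{\bL}(0)}\big)$ ${\cal A}$-equivalent to the $A_{m+1}$ model curve, which $\Phi$ carries diffeomorphically onto $LP_L$. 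The $LP_N$ version is the same with $\alpha_N$, $\widetilde{\N}$ in place of $\alpha_L$, $\widetilde{\bL}$. Substituting the indices as in the first paragraph then yields all four bullets.

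The main obstacle is bookkeeping rather than anything conceptual: one has to track the chain of index shifts --- contact order $m$ corresponds to the vanishing of the first $m-2$ derivatives of $\sigma$ (resp.\ the first $m-1$ derivatives of $\alpha$), which in turn corresponds to an $A_{m+1}$ singularity --- and, most delicately, keep the non-degeneracy clauses aligned, so that \emph{exact} $m$-point contact matches the \emph{exact} $A_{m+1}$ type rather than merely ``$A_{\ge m+1}$''; this is why Theorem~\ref{thm:ghcont} must be invoked at two consecutive values of $k$, and why the hypotheses that $f_L,f_N$ are fronts and that the $\widetilde H$-unfoldings are nondegenerate are essential. The one ingredient that goes beyond what is literally stated earlier is the criterion for the $A_4$-point of a front and the corresponding statements for the $A_4$- and $A_5$-points of a planar frontal; both are the evident continuations of Fact~\ref{fact:crie} and of the discriminant argument in the proof of Theorem~\ref{thm:singpedal}, and are supplied by the already-computed relations $\eta^{j}\Lambda\doteq\sigma_L^{(j-1)}$ and the Taylor coefficients of $\widetilde h_L$ in terms of $\alpha_L$.
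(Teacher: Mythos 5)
Your proposal is correct and follows essentially the route the paper intends: the paper offers no proof beyond the remark that the corollary follows from Theorems~\ref{thm:singsurf}, \ref{thm:singpedal} and \ref{thm:ghcont}, and your decomposition of each model curve into its two level-set factors, the reduction to the two ``atomic'' equivalences, and the index bookkeeping (contact order $m$ $\leftrightarrow$ exact vanishing order of $\sigma$ resp.\ $\alpha$ $\leftrightarrow$ $A_{m+1}$-point, obtained by invoking Theorem~\ref{thm:ghcont} at two consecutive values of $k$) constitute precisely that assembly. You are also right to flag the one genuine issue, namely that for $k_i=3$ (and, in the pedal bullets, already for $k_i=2$) the statement requires $A_4$-points of fronts and $A_4$-, $A_5$-points of the pedal curves, which the paper never defines and for which the versal-unfolding argument of Theorem~\ref{thm:singpedal} no longer applies verbatim; this is a defect of the paper's own formulation rather than of your argument, and your proposed remedies (the $A_k$-criteria of the reference behind Fact~\ref{fact:crie}, or a direct Taylor expansion) are the natural ways to close it.
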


\section{Special lightlike loci}\label{sec:special}
We consider a frontal $f$, 
where $f_L$ or $f_N$ has special properties.
Since $f_L|_{S(f_L)}=\hat\bgamma(u)+\bL(u)/\alpha_L$
(respectively, $f_N|_{S(f_N)}=\hat\bgamma(u)+\N(u)/\alpha_N$),
the singular value of $f_L$ (respectively, $f_N$),
is one point set if and only if
$\sigma_L\equiv0$
(respectively, $\sigma_N\equiv0$).
As we mentioned above, these conditions do not depend on the choice of $\bL$.
We set two constant points
$V_L=\hat\bgamma(u)+\bL(u)/\alpha_L$ if
$\sigma_L\equiv0$, and
$V_N=\hat\bgamma(u)+\N(u)/\alpha_N$ if
$\sigma_N\equiv0$,
these points do not depend on the choice of 
$\bL$ (cf. \eqref{eq:alphadepend}).
In this section, we consider
geometric meanings of
$\sigma_L$
and
$\sigma_N$.
Since the function 
\begin{align*}
&d_L(u)
=\left|
\hat\bgamma(u)-V_C
\right|
=\left|
\hat\bgamma(u)-\hat\bgamma(u)-\dfrac{\bL(u)}{\alpha_L}
\right|
\\
&\left(
\text{respectively,\ }
d_N(u)
=\left|
\hat\bgamma(u)-V_N
\right|
=\left|
\hat\bgamma(u)-\hat\bgamma(u)-\dfrac{\N(u)}{\alpha_N}
\right|
\right)
\end{align*}
vanishes identically, 
if $\sigma_L\equiv0$
(respectively, $\sigma_N\equiv0$),
the curve $\hat\bgamma$ lies on the lightcone $LC_L$ whose vertex is
$V_L$
(respectively, 
the lightcone $LC_N$ whose vertex is
$V_N$).
If 
$\sigma_L\equiv0$
and
$\sigma_N\equiv0$
holds simultaneously,
then $\hat\bgamma$ is an ellipse
of an intersection $LC_L\cap LC_N$.
Thus the pair
$(\sigma_L,\sigma_N)$
measures how $L(f)$ is close to a ellipse which is obtained as an
intersection of two lightcones.
This ellipse is a (Euclidean) circle
if and only if
$$
\inner{\pmt{1\\0\\0}}
{\dfrac{\bL(u)}{\alpha_L(u)}+\dfrac{\N(u)}{\alpha_N(u)}}=0.
$$
\begin{example}\label{eq:tangcone}
Let $lc(x,y)=(x, y, \sqrt{x^2 + y^2})$ be a lightcone,
and let 
$$\nu_l(x,u)=
\dfrac{1}{\sqrt{2}}\left(
-\dfrac{x}{\sqrt{x^2+y^2}},-\dfrac{y}{\sqrt{x^2+y^2}},1\right)
$$
be a Euclidean unit normal of $lc(x,y)$.
Let $\gamma(u)$ be a curve in the $xy$-plane,
$\hat\bgamma(u)=lc(\gamma(u))$,
and
let $\nu_\gamma(u)=\nu_l(\gamma(u))\times\hat\bgamma'(u)
/
|\nu_l(\gamma(u))\times\hat\bgamma'(u)\cdot
\nu_l(\gamma(u))\times\hat\bgamma'(u)|
$
be a Euclidean left-word unit normal vector of $\hat\bgamma$ 
as a curve on $lc$.
We set
$$
f_\gamma(u,v)=\hat\bgamma(u) + r 
\big(\nu_l(\gamma(u)) + \cos v \nu_l(\gamma(u)) + \sin v \nu_\gamma(u)\big)
$$
for $r>0$.
Then by the construction, $L(f_\gamma)$ is the image of $\hat\bgamma$,
and
the image of $(f_\gamma)_L$ is the image of $lc$. We set $r=1/10$.
\begin{enumerate}
\item Let us set $\gamma_1(u)=(\cos u, 2 \sin u)/3$.
Then the singular value of $(f_{\gamma_1})_L$ is a point,
and $(f_{\gamma_1})_L$ is a lightcone.
The images of $f_{\gamma_1}$ and $(f_{\gamma_1})_L$ are
drawn in Figure \ref{fig:tangcone}, left.
\item Let us set $\gamma_2(u)=(2 \sin u + 1, \sqrt3 \cos u)/2$.
Then both of the singular values of $(f_{\gamma_2})_L$ 
and $(f_{\gamma_2})_N$ are points,
and $(f_{\gamma_2})_L$ and $(f_{\gamma_2})_N$ are lightcones.
The axes of these lightcones do not coincide.
The images of $f_{\gamma_2}$, $(f_{\gamma_2})_L$ and $(f_{\gamma_2})_N$ are
drawn in Figure \ref{fig:tangcone}, center (i.e. $\gamma_2$ is the osculating ellipse).
\item Let us set $\gamma_3(u)=(\cos u, \sin u)$.
Then both of the singular values of $(f_{\gamma_3})_L$ 
and $(f_{\gamma_3})_N$ are points,
and $(f_{\gamma_3})_L$ and $(f_{\gamma_3})_N$ are lightcones.
Furthermore, the axes of these lightcones coincide.
The images of $f_{\gamma_3}$, $(f_{\gamma_3})_L$ and $(f_{\gamma_3})_N$ are
drawn in Figure \ref{fig:tangcone}, right (i.e. $\gamma_3$ is the osculating ellipse (circle in the Euclidean sense)).
\end{enumerate}
\begin{figure}[htb]
\begin{center}
\begin{tabular}{ccc}
 \includegraphics[width=0.3\linewidth]{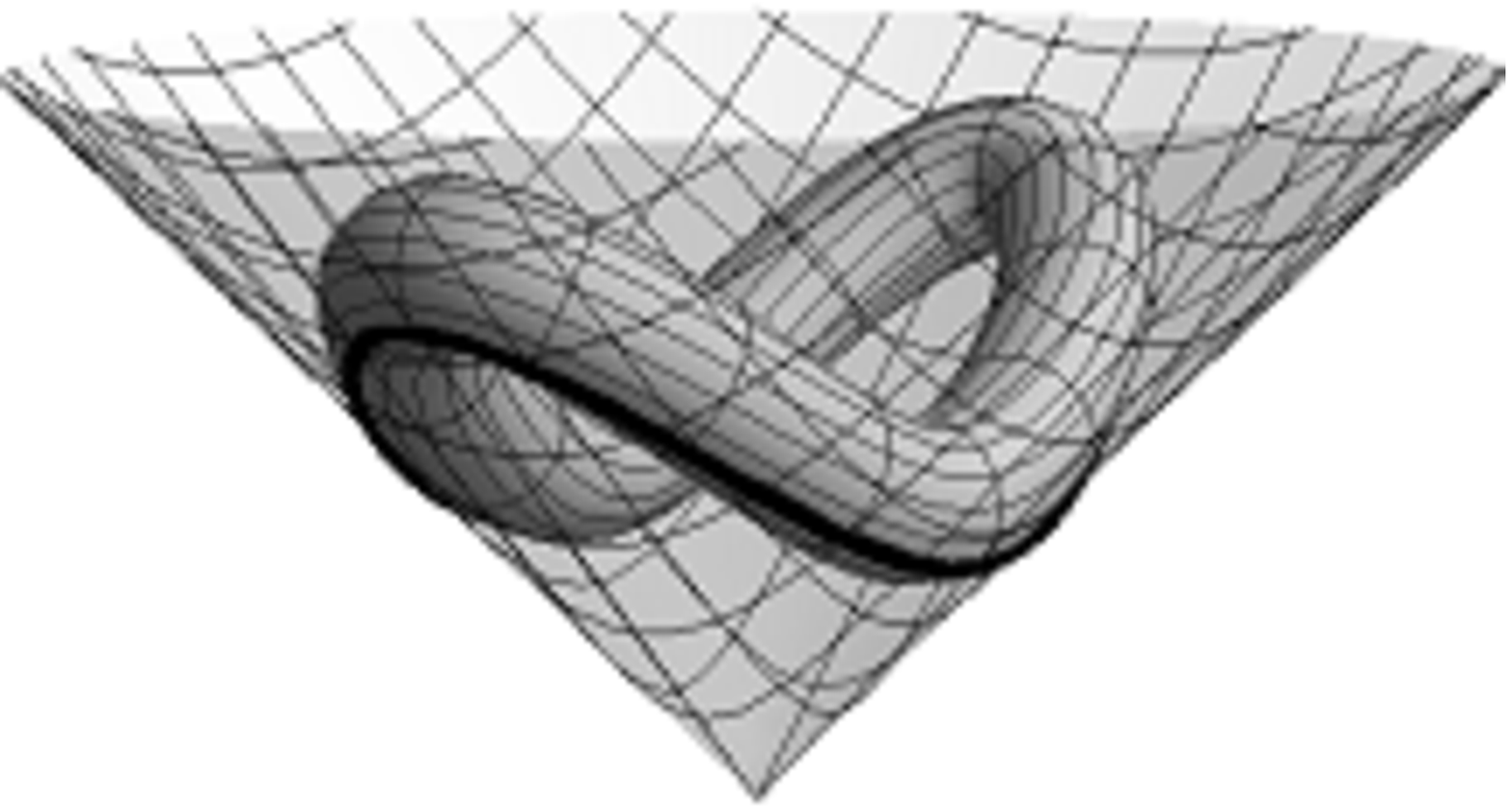}
&\includegraphics[width=0.3\linewidth]{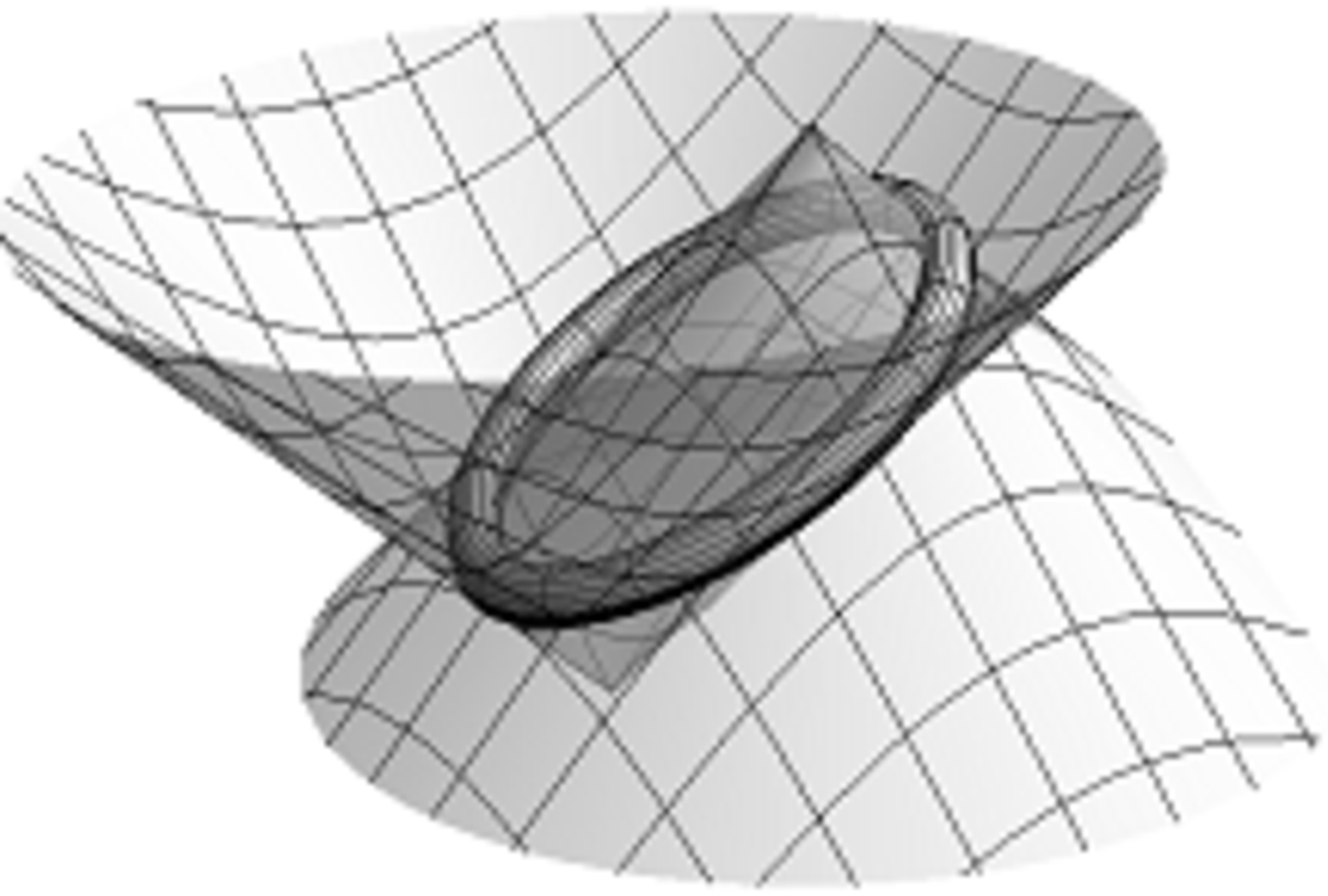}
&\includegraphics[width=0.27\linewidth]{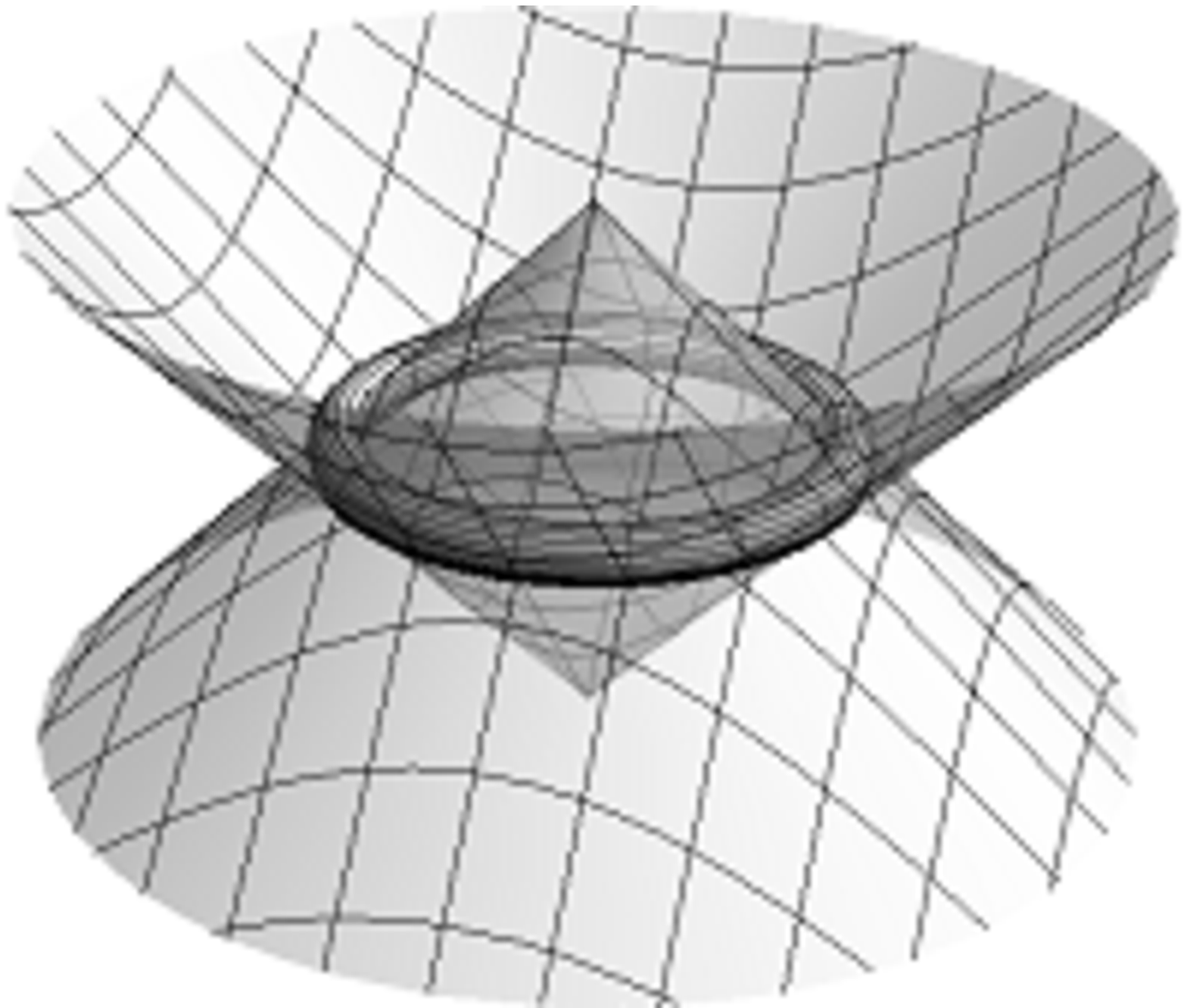}
\end{tabular}
\end{center}
\caption{Surfaces of Example \ref{eq:tangcone}.}
\label{fig:tangcone}
\end{figure}
\end{example}
\begin{example}\label{eq:passce}
We give an example $\sigma_L$ and $\alpha_N$ are constantly zero
and $L(f)$ pass the cuspidal edge but it is not along cuspidal edge.
Let $lc(x,y)$ is as in Example \ref{eq:tangcone}
and $lp(x,y)=(x, y, -(x - 1) - (y - 1))$ be a lightlike plane.
We set 
$\hat\bgamma(u)=2(1,\cos u,\sin u)/(1+\cos u+\sin u)$.
Then $\hat\bgamma(u)$ is a parametrization of the intersection
of the images of $lc$ and $lp$.
Let us set
$c(u,v)=(c_1(u,v),c_2(u,v))=
(v (2 u+v),v^2 (3 u+2 v))$,
and
$$
f(u,v)=\hat\bgamma(u)+c_1(u,v)\pmt{1\\1\\0}+c_2(u,v)\pmt{-1\\1\\0}.
$$
Then $f_v(u,u)=0$ and $f$ at $(u,u)$ is a cuspidal edge.
On the other hand, $f_v(u,0)=2u(1,1,0)$ holds, thus 
$f$ is lightlike at $f(u,0)=\hat\bgamma(u)$.
We set $\bL=(1,1,0)$ and $\N=(-1,1,0)$.
Then $\sigma_L=0$ and $\alpha_N=0$ hold.
In fact, by the above construction,
$\hat\bgamma$ is contained in the osculating contact lightcone
and the transversal contact lightlike plane 
(i.e. $\hat\bgamma$ is the $N$-osculating parabola).
\end{example}
\appendix
\section{Proof of Theorem \ref{thm:ghcont}}\label{sec:proofthm}
One can easily see that $g_L'(0)=g_L''(0)=0$.
We set a function $\beta_L(u)=\inner{\bL(u)}{\hat\bgamma(u)-\bx_L}$.
Since
\begin{align}
g_L^{(3)}&=
\inner{-2\alpha_N\alpha_L\be+\sigma_N\bL+\sigma_L\N}{\hat\bgamma-\bx_L}
\nonumber\\
&=
-2\alpha_N\alpha_L\inner{\be}{\hat\bgamma-\bx_L}
+\sigma_N\inner{\bL}{\hat\bgamma-\bx_L}
+\sigma_L\inner{\N}{\hat\bgamma-\bx_L}\nonumber\\
&=
-2\alpha_N\alpha_Lg_L'
+\sigma_N\beta_L
+\sigma_L\inner{\N}{\hat\bgamma-\bx_L},\label{eq:gdiff}
\end{align}
and $\beta_L(0)=0$, we see 
$g_L^{(3)}(0)=0$ if and only if $\sigma_L(0)=0$.
We have the following lemma.
\begin{lemma}\label{lem:ghzero}
If\/
$g_L^{(l)}(0)=0$, then\/
$\beta_L^{(l)}(0)=0$ ($l=1,\ldots,k$).
\end{lemma}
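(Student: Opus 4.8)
To prove Lemma~\ref{lem:ghzero}, the plan is to reduce everything to a single first-order differential identity relating $\beta_L'$, $g_L'$ and $\beta_L$, and then to propagate it by repeated differentiation together with an induction on $l$.

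First I would record the two structural inputs. From the choice $\bx_L=\hat\bgamma(0)+\bL(0)/\alpha_L(0)$ we get $\hat\bgamma(0)-\bx_L=-\bL(0)/\alpha_L(0)$, which is a lightlike vector; since $\inner{\bL}{\bL}=0$ this yields the base case $\beta_L(0)=\inner{\bL(0)}{\hat\bgamma(0)-\bx_L}=0$ (and, incidentally, $g_L(0)=0$). Next, differentiating $\beta_L(u)=\inner{\bL(u)}{\hat\bgamma(u)-\bx_L}$ and using $\hat\bgamma'=\be$, the Frenet--Serret relation $\bL'=-\alpha_L\be-\alpha_G\bL$, the orthogonality $\inner{\bL}{\be}=0$ from \eqref{eq:condln}, and $g_L'=2\inner{\be}{\hat\bgamma-\bx_L}$, one obtains
\[
\beta_L'=\inner{\bL'}{\hat\bgamma-\bx_L}+\inner{\bL}{\be}
=-\alpha_L\inner{\be}{\hat\bgamma-\bx_L}-\alpha_G\beta_L
=-\frac{\alpha_L}{2}\,g_L'-\alpha_G\beta_L .
\]

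With this identity in hand I would differentiate it $l-1$ times by the Leibniz rule and evaluate at $u=0$: the result expresses $\beta_L^{(l)}(0)$ as a linear combination, with coefficients built from $\alpha_L,\alpha_G$ and their derivatives at $0$, of the values $g_L^{(1)}(0),\dots,g_L^{(l)}(0)$ and $\beta_L^{(0)}(0),\dots,\beta_L^{(l-1)}(0)$ only. Assuming $g_L^{(j)}(0)=0$ for $j=1,\dots,k$, an induction on $l\le k$ then closes immediately: the base case $\beta_L(0)=0$ is the one above, and if $\beta_L^{(j)}(0)=0$ for all $j\le l-1$ then every term on the right vanishes, so $\beta_L^{(l)}(0)=0$. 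This gives $\beta_L^{(l)}(0)=0$ for $l=1,\dots,k$, as claimed.

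I do not expect a genuine obstacle: the whole argument is carried by the displayed identity, so the only care needed is (i) using the Frenet--Serret coefficients correctly, so that the $\bL$-component of $\bL'$ is accounted for, and (ii) checking in the Leibniz expansion that no $g_L^{(j)}$ with $j>l$ and no $\beta_L^{(j)}$ with $j\ge l$ enters --- which is precisely what makes the induction self-contained. An alternative, slightly slicker route: expand $\hat\bgamma-\bx_L$ in the frame $\{\be,\bL,\N\}$ as $a\be+b\bL+c\N$; then $a=\tfrac12 g_L'$, $c=\beta_L$, $b(0)=-1/\alpha_L(0)\ne0$, and $g_L=a^2+2bc$, whence $\beta_L=(g_L-a^2)/(2b)$ and a vanishing-order count finishes the job. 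I would, however, prefer the recursive version, since it avoids dividing by $b$.
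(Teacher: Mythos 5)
Your proof is correct and follows essentially the same route as the paper: establish the first-order identity $\beta_L'=-\tfrac{\alpha_L}{2}g_L'-\alpha_G\beta_L$ (the paper's equation \eqref{eq:hdiff}, where the factor $\tfrac12$ is in fact omitted, harmlessly) and then propagate it by Leibniz differentiation and induction from the base case $\beta_L(0)=0$. Nothing further is needed.
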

\begin{proof}
Since $\beta_L(0)=0$, 
the case $k=1$ follows from
\begin{equation}\label{eq:hdiff}
\begin{array}{rl}
\beta_L'
&=\inner{-\sigma_L\be-\alpha_G\bL}{\hat\bgamma-\bx_L}\\
&=
-\alpha_L\inner{\be}{\hat\bgamma-\bx_L}
-\alpha_G\inner{\bL}{\hat\bgamma-\bx_L}
=
-\alpha_Lg_L'
-\alpha_G\beta_L.
\end{array}
\end{equation}
We assume that the assertion is true for $k=1,\ldots,K$,
and we assume $g_L^{(l)}(0)=0$ $k=1,\ldots,K+1$.
Then by the assumption, $\beta_L^{(l)}(0)=0$ $k=1,\ldots,K$
holds.
Then $\beta_L^{K+1}(0)=0$ follows from $K$ times 
differentiation of \eqref{eq:hdiff}.
\end{proof}
\begin{proof}[Proof of Theorem {\rm \ref{thm:ghcont} (1)}]
We have shown the case $k=3$.
We assume that the assertion is true for $k=1,\ldots,K$.
We assume that $g_L^{(l)}(0)=0$ ($k=3,\ldots,K$), then
by the assumption of induction, $\sigma_L^{(l-3)}(0)=0$ ($l=3,\ldots,K$)
holds.
By Lemma \ref{lem:ghzero}, we have $\beta_L^{(l)}(0)=0$ ($k=3,\ldots,K$).
Then by $K-2$ times differentiation of \eqref{eq:gdiff},
we see
$$
g_L^{(K+1)}(0)=\sigma_L^{(K-2)}(0)\inner{\N(0)}{\hat\bgamma(0)-\bx_L}.
$$
Thus the assertion is true for $k=K+1$.
The assertion for $g_N$ and $\sigma_N$ can be shown by 
just interchanging the subscripts $N$ and $L$.
\end{proof}
One can easily see that $\widetilde h_L(0)=\widetilde h_L'(0)=0$.
We set a function $\delta_L(u)=\inner{\bL(u)}{\widetilde{\bL(0)}}$.
Since
\begin{align}
\widetilde h_L''
=&
\inner{\alpha_N\bL+\alpha_L\N}{\widetilde{\bL(0)}}\nonumber\\
=&
 \alpha_N\inner{\bL}{\widetilde{\bL(0)}}
+\alpha_L\inner{\N}{\widetilde{\bL(0)}}\nonumber\\
=&
 \alpha_N\delta_L
+\alpha_L\inner{\N}{\widetilde{\bL(0)}}\label{eq:hdiff2},
\end{align}
and $\delta_L(0)=0$, we see
$\widetilde h_L''(0)=0$ if and only if $\alpha_L(0)=0$.
We have the following  lemma.
\begin{lemma}\label{lem:tildehl}
If\/ $\alpha_L^{(l)}(0)=0$, then\/
$\delta_L^{(l+1)}(0)=0$ $(l=0,\ldots,k)$.
\end{lemma}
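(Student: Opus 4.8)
The plan is to obtain a first-order linear recursion for $\delta_L$ whose only coefficient that can fail to vanish at the origin is $\alpha_L$, and then to iterate it by Leibniz differentiation, exactly mirroring the proof of Lemma \ref{lem:ghzero}. Differentiating $\delta_L(u)=\inner{\bL(u)}{\widetilde{\bL(0)}}$, substituting the Frenet--Serret type relation $\bL'=-\alpha_L\be-\alpha_G\bL$, and observing that $\widetilde h_L'=\inner{\hat\bgamma'}{\widetilde{\bL(0)}}=\inner{\be}{\widetilde{\bL(0)}}$, one obtains
\[
\delta_L'=\inner{\bL'}{\widetilde{\bL(0)}}=-\alpha_L\,\widetilde h_L'-\alpha_G\,\delta_L ,
\]
which is the exact analogue of \eqref{eq:hdiff}. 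The two initial facts I need are $\delta_L(0)=0$ and $\widetilde h_L'(0)=0$: the first holds because $\bL(0)$ is lightlike and $\widetilde{\bL(0)}$ is a scalar multiple of it, and the second is already recorded just above \eqref{eq:hdiff2}.

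Next I would induct on $l$. For $l=0$, evaluating the displayed recursion at $u=0$ gives $\delta_L'(0)=-\alpha_L(0)\widetilde h_L'(0)-\alpha_G(0)\delta_L(0)=0$. For the inductive step, assume $\delta_L^{(m)}(0)=0$ for $m=0,1,\ldots,l$, and recall the standing hypothesis that $\alpha_L^{(j)}(0)=0$ for $j=0,\ldots,k$, with $l\le k$. Applying the Leibniz rule to differentiate the recursion $l$ times,
\[
\delta_L^{(l+1)}=-\sum_{j=0}^{l}\binom{l}{j}\alpha_L^{(j)}\,\widetilde h_L^{(l-j+1)}-\sum_{j=0}^{l}\binom{l}{j}\alpha_G^{(j)}\,\delta_L^{(l-j)} ,
\]
and evaluating at $u=0$: in the first sum every index satisfies $j\le l\le k$, hence $\alpha_L^{(j)}(0)=0$ and the whole sum vanishes, so no control over the higher derivatives of $\widetilde h_L$ is needed; in the second sum $0\le l-j\le l$, so $\delta_L^{(l-j)}(0)=0$ by the inductive hypothesis. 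Therefore $\delta_L^{(l+1)}(0)=0$, which closes the induction and gives the lemma for all $l=0,\ldots,k$.

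I do not expect a genuine obstacle here; the only delicate points are (i) writing the recursion in a form that closes, so that repeated differentiation produces only derivatives of $\alpha_L$, $\alpha_G$, $\delta_L$ and $\widetilde h_L$, of which merely the first and third need to be controlled at $0$, and (ii) keeping the Leibniz index ranges aligned with the hypothesis $\alpha_L^{(j)}(0)=0$ $(j\le k)$ and with the inductive data on $\delta_L$. Once this lemma is available, Theorem \ref{thm:ghcont}(2) follows just as part (1) followed from Lemma \ref{lem:ghzero}: differentiating \eqref{eq:hdiff2} by Leibniz and feeding in the vanishing of the lower derivatives of $\delta_L$ supplied by this lemma reduces $\widetilde h_L^{(k)}(0)$ to a nonzero multiple of $\alpha_L^{(k-2)}(0)$ (the multiple being $\inner{\N(0)}{\widetilde{\bL(0)}}\neq0$), and the corresponding statement for $\widetilde h_N,\delta_N,\alpha_N$ is obtained by interchanging the subscripts $L$ and $N$.
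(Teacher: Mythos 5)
Your proposal is correct and follows essentially the same route as the paper: you derive the same first-order recursion $\delta_L'=-\alpha_L\inner{\be}{\widetilde{\bL(0)}}-\alpha_G\delta_L$ (the paper's \eqref{eq:deltadiff}) from the Frenet--Serret relations and then iterate it by repeated (Leibniz) differentiation, using the hypothesis on $\alpha_L^{(j)}(0)$ to kill the first sum and the inductive vanishing of the lower derivatives of $\delta_L$ to kill the second. Your write-up is merely more explicit about the Leibniz index bookkeeping than the paper's one-line induction step.
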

\begin{proof}
Since $\delta_L(0)=0$, 
the case $k=0$ follows from
\begin{equation}\label{eq:deltadiff}
\begin{array}{rl}
\delta_L'
&=\inner{-\alpha_L\be-\alpha_G\bL}{\widetilde{\bL(0)}}\\
&=
-\alpha_L\inner{\be}{\widetilde{\bL(0)}}
-\alpha_G\inner{\bL}{\widetilde{\bL(0)}}
=
-\alpha_L\inner{\be}{\widetilde{\bL(0)}}
-\alpha_G\delta_L.
\end{array}
\end{equation}
We assume that the assertion is true for $k=0,\ldots,K$,
and we assume $\alpha_L^{(l)}(0)=0$ $k=0,\ldots,K+1$.
Then by the assumption, $\delta_L^{(l+1)}(0)=0$ $k=1,\ldots,K$
holds.
Thus $\delta_L^{K+2}(0)=0$ follows from $K+1$ times 
differentiation of \eqref{eq:deltadiff}.
\end{proof}
\begin{proof}[Proof of Theorem {\rm \ref{thm:ghcont} (2)}]
We have shown the case $k=2$.
We assume that the assertion is true for $k=2,\ldots,K$.
We assume that $\widetilde h_L^{(l)}(0)=0$ ($k=2,\ldots,K$), then
by the assumption of induction, $\alpha_L^{(l-2)}(0)=0$ ($l=2,\ldots,K$)
holds.
By Lemma \ref{lem:tildehl}, we have $\delta_L^{(l-1)}(0)=0$ ($k=2,\ldots,K$).
Then by $K-1$ times differentiation of \eqref{eq:hdiff2},
we see
$$
\widetilde h_L^{(K+1)}(0)=
\alpha_L^{(K-1)}(0)\inner{\N(0)}{\widetilde{\bL(0)}}.
$$
Thus the assertion is true for $k=K+1$.
The assertion for $\widetilde h_N$ and $\sigma_N$ can be shown by 
just interchanging the subscripts $N$ and $L$.
\end{proof}


\begin{flushleft}
\textsc{Atsufumi Honda}\\
Department of Applied Mathematics, \\
Faculty of Engineering, \\
Yokohama National University,\\
Tokiwadai 79-5, Hodogaya, \\
Yokohama 240-8501, Japan\\
e-mail: honda-atsufumi-kp@ynu.ac.jp
\medskip

\textsc{Shyuichi Izumiya}\\ 
Department of Mathematics,\\
Hokkaido University,\\
Sapporo 060-0810, Japan\\
e-mail: izumiya@math.sci.hokudai.ac.jp
\medskip

\textsc{Kentaro Saji}\\
Department of Mathematics,\\
Kobe University,\\
Rokko 1-1, Nada, \\
Kobe 657-8501, Japan\\
e-mail: saji@math.kobe-u.ac.jp

\medskip
\textsc{Keisuke Teramoto}\\ 
Institute of Mathematics for Industry,\\ 
Kyushu University, \\
Motooka 744, Nishi-ku,\\
Fukuoka 819-0395, Japan\\
e-mail: k-teramoto@imi.kyushu-u.ac.jp

\end{flushleft}
\end{document}